\providecommand{\U}[1]{\protect\rule{.1in}{.1in}}
\theoremstyle{definition}
\newtheorem{theo}{Theorem}[section]
\newenvironment{theorem}[1][]
{\begin{theo}[#1]\begin{leftbar}}
{\end{leftbar}\end{theo}}
\newtheorem{lem}[theo]{Lemma}
\newenvironment{lemma}[1][]
{\begin{lem}[#1]\begin{leftbar}}
{\end{leftbar}\end{lem}}
\newtheorem{prop}[theo]{Proposition}
\newtheorem{defi}[theo]{Definition}
\newtheorem{remk}[theo]{Remark}
\newenvironment{remark}[1][]
{\begin{remk}[#1]\begin{leftbar}}
{\end{leftbar}\end{remk}}
\newtheorem{coro}[theo]{Corollary}
\newenvironment{corollary}[1][]
{\begin{coro}[#1]\begin{leftbar}}
{\end{leftbar}\end{coro}}
\newtheorem{conv}[theo]{Convention}
\newtheorem{quest}[theo]{Question}
\newenvironment{question}[1][]
{\begin{quest}[#1]\begin{leftbar}}
{\end{leftbar}\end{quest}}
\newtheorem{warn}[theo]{Warning}
\newtheorem{conj}[theo]{Conjecture}
\newtheorem{exam}[theo]{Example}
\newenvironment{example}[1][]
{\begin{exam}[#1]\begin{leftbar}}
{\end{leftbar}\end{exam}}
\let\sumnonlimits\sum
\let\prodnonlimits\prod
\let\cupnonlimits\bigcup
\let\capnonlimits\bigcap
\renewcommand{\sum}{\sumnonlimits\limits}
\renewcommand{\prod}{\prodnonlimits\limits}
\renewcommand{\bigcup}{\cupnonlimits\limits}
\renewcommand{\bigcap}{\capnonlimits\limits}
\begin{document}

\title{The diagonal derivative of a skew Schur polynomial}
\author{Darij Grinberg\thanks{Department of Mathematics, Drexel University,
Philadelphia, U.S.A.
(\href{mailto:darijgrinberg@gmail.com}{\texttt{darijgrinberg@gmail.com}})},
Nazar Korniichuk\thanks{Kyiv Natural-Scientific Lyceum No. 145, Kyiv, Ukraine
(\href{mailto:n.korniychuk.a@gmail.com}{\texttt{n.korniychuk.a@gmail.com}})},\\Kostiantyn Molokanov\thanks{Kyiv Natural-Scientific Lyceum No. 145, Kyiv,
Ukraine
(\href{mailto:kostyamolokanov@gmail.com}{\texttt{kostyamolokanov@gmail.com}}%
)}, and Severyn Khomych\thanks{Brucknergymnasium Wels, Austria
(\href{mailto:severyn.khomych@gmail.com}{\texttt{severyn.khomych@gmail.com}})}}
\date{version 1.0, 21 February 2024}
\maketitle

\begin{abstract}
\textbf{Abstract.} We prove a formula for the image of a skew Schur polynomial
$s_{\lambda/\mu}\left(  x_{1}, x_{2}, \ldots, x_{N}\right)  $ under the
differential operator $\nabla:= \dfrac{\partial}{\partial x_{1}}%
+\dfrac{\partial}{\partial x_{2}}+\cdots+\dfrac{\partial}{\partial x_{N}}$.
This generalizes a formula of Weigandt for $\nabla\left(  s_{\lambda}\right)
$.

\end{abstract}

\section{Notations and definitions}

Fix a nonnegative integer $N$. Let $R = \mathbb{Z}\left[  x_{1},x_{2}%
,\ldots,x_{N}\right]  $ be the ring of polynomials in $N$ indeterminates
$x_{1},x_{2},\ldots,x_{N}$ with integer coefficients.

Let $\nabla:R\rightarrow R$ be the operator $\dfrac{\partial}{\partial x_{1}%
}+\dfrac{\partial}{\partial x_{2}}+\cdots+\dfrac{\partial}{\partial x_{N}}$.
This map $\nabla$ is a derivation (i.e., it is $\mathbb{Z}$-linear and
satisfies $\nabla\left(  fg\right)  =\left(  \nabla f\right)  g+f\left(
\nabla g\right)  $ for all $f,g\in R$). We call it the \emph{diagonal
derivative} since (in the language of analysis) it is the directional
derivative with respect to the vector $\left(  1,1,\ldots,1\right)  $. (The
notation $\nabla$ comes from the related operator in \cite{Nenashev}, but this
is not the vector differential operator $\nabla$ known from analysis.)

We let $\left[  N \right]  $ denote the set $\left\{  1, 2, \ldots, N
\right\}  $.

For each $i\in\left[  N\right]  $, we let $e_{i}$ be the $N$-tuple $\left(
0,0,\ldots,0,1,0,0,\ldots,0\right)  \in\mathbb{Z}^{N}$ where the $1$ is at the
$i$-th position. Addition and subtraction of $N$-tuples are defined entrywise
(i.e., these $N$-tuples are viewed as vectors in the $\mathbb{Z}$-module
$\mathbb{Z}^{N}$). Thus, if $\mu\in\mathbb{Z}^{N}$ is any $N$-tuple, then
$\mu+e_{i}$ is the $N$-tuple obtained from $\mu$ by increasing the $i$-th
entry by $1$.

We shall use the standard notations regarding symmetric polynomials in $N$
variables $x_{1}, x_{2}, \ldots, x_{N}$ as introduced (e.g.) in
\cite{Stembridge} (but we write $N$ for what was called $n$ in
\cite{Stembridge}).

If $a\in\mathbb{Z}^{N}$ is any $N$-tuple, and if $i\in\left[  N\right]  $,
then the notation $a_{i}$ shall denote the $i$-th entry of $a$ (so that
$a=\left(  a_{1},a_{2},\ldots,a_{N}\right)  $).

We let $\mathcal{P}_{N}$ denote the set of all $N$-tuples $a\in\mathbb{Z}^{N}$
that satisfy
\[
a_{1}\geq a_{2}\geq\cdots\geq a_{N}\geq0.
\]
For instance, if $N=3$, then the $N$-tuples $\left(  3,1,0\right)  $ and
$\left(  2,2,1\right)  $ belong to $\mathcal{P}_{N}$, while the $N$-tuples
$\left(  2,1,-1\right)  $ and $\left(  1,2,1\right)  $ don't.

The $N$-tuples in $\mathcal{P}_{N}$ are called \emph{partitions of length
$\leq N$} (or \emph{partitions with at most $N$ nonzero terms}, following the
terminology of \cite{Stembridge}). In algebraic combinatorics, such an
$N$-tuple $a\in\mathcal{P}_{N}$ usually gets identified with the infinite
sequence $\left(  a_{1},a_{2},\ldots,a_{N},0,0,0,\ldots\right)  $, which is
called an (integer) partition. Any integer partition has a so-called
\emph{Young diagram} assigned to it (see, e.g., \cite[\S 1.7]{EC1}). If
$\mu\in\mathcal{P}_{N}$, and if $i\in\left[  N\right]  $ is such that the
$N$-tuple $\mu+e_{i}$ again belongs to $\mathcal{P}_{N}$, then the Young
diagram of $\mu+e_{i}$ is obtained from the Young diagram of $\mu$ by adding a
single box.

For any integer $n$, we let $h_{n}$ denote the complete homogeneous symmetric
polynomial in the variables $x_{1},x_{2},\ldots,x_{N}$. It is defined by
\begin{equation}
h_{n}:=\sum_{\substack{\left(  i_{1},i_{2},\ldots,i_{N}\right)  \in
\mathbb{N}^{N};\\i_{1}+i_{2}+\cdots+i_{N}=n}}x_{1}^{i_{1}}x_{2}^{i_{2}}\cdots
x_{N}^{i_{N}}, \label{eq.def.hn}%
\end{equation}
where $\mathbb{N}:=\left\{  0,1,2,\ldots\right\}  $ is the set of all
nonnegative integers (so that the sum ranges over all $N$-tuples $\left(
i_{1},i_{2},\ldots,i_{N}\right)  $ of nonnegative integers satisfying
$i_{1}+i_{2}+\cdots+i_{N}=n$). (Thus, $h_{0}=1$, and $h_{n}=0$ for each
negative $n$.) Clearly, $h_{n}\in R$ for each $n\in\mathbb{Z}$.

If $\lambda$ and $\mu$ are two $N$-tuples in $\mathcal{P}_{N}$, then the
notation $s_{\lambda/\mu}$ denotes the skew Schur polynomial $s_{\lambda/\mu
}\left(  x_{1},x_{2},\ldots,x_{N}\right)  $ (see, e.g., \cite{Stembridge} for
a definition\footnote{To be more precise, $s_{\lambda/\mu}$ is defined in
\cite{Stembridge} in the case when $\mu\leq\lambda$ (meaning that $\mu_{i}%
\leq\lambda_{i}$ for each $i\in\left[  N\right]  $). In all other cases,
$s_{\lambda/\mu}$ is defined to be $0$.}). (Note that this is called a
\textquotedblleft skew Schur function\textquotedblright\ in \cite{Stembridge},
but more commonly the latter word is reserved for the analogous object in
infinitely many variables.) The \emph{Jacobi--Trudi formula} says that any
$\lambda\in\mathcal{P}_{N}$ and $\mu\in\mathcal{P}_{N}$ satisfy
\begin{equation}
s_{\lambda/\mu}=\det\left(  h_{\lambda_{i}-\mu_{j}-i+j}\right)  _{i,j\in
\left[  N\right]  } \label{pf.thm.1.JT}%
\end{equation}
(where the notation $\left(  a_{i,j}\right)  _{i,j\in\left[  N\right]  }$
means the $N\times N$-matrix with given entries $a_{i,j}$). Proofs of this
formula can be found (e.g.) in \cite[(2.4.16)]{GriRei} or \cite[(7.69)]%
{EC2}\footnote{Both texts \cite[(2.4.16)]{GriRei} and \cite[(7.69)]{EC2} state
the Jacobi--Trudi formula for Schur functions in infinitely many variables
$x_{1},x_{2},x_{3},\ldots$ instead of Schur polynomials in finitely many
variables $x_{1},x_{2},\ldots,x_{N}$. However, the latter version can be
obtained from the former by setting $x_{N+1},x_{N+2},x_{N+3},\ldots$ to $0$.
\par
Note also that \cite[(7.69)]{EC2} assumes that $\mu\subseteq\lambda$, but the
proofs do not require this assumption.}. We can use the formula
\eqref{pf.thm.1.JT} as a definition of $s_{\lambda/\mu}$ here, as we will need
no other properties of $s_{\lambda/\mu}$. However, it is worth observing that
\begin{equation}
s_{\lambda/\mu}=0 \label{pf.thm.1.s=0}%
\end{equation}
unless $\mu\subseteq\lambda$ (that is, unless $\mu_{i}\leq\lambda_{i}$ for
each $i\in\left[  N\right]  $).


\section{The results}

The main result of this note is the following formula, which generalizes
Weigandt's recent result \cite[The Symmetric Derivative Rule]{Weigan23}:

\begin{theorem}
\label{thm.1} Let $a$ and $b$ be two integers such that $a+b=N-1$. Let
$\lambda\in\mathcal{P}_{N}$ and $\mu\in\mathcal{P}_{N}$. Define two further
$N$-tuples $\ell\in\mathbb{Z}^{N}$ and $m\in\mathbb{Z}^{N}$ by setting%
\[
\ell_{i}:=\lambda_{i}-i\ \ \ \ \ \ \ \ \ \ \text{and}\ \ \ \ \ \ \ \ \ \ m_{i}%
:=\mu_{i}-i\ \ \ \ \ \ \ \ \ \ \text{for each }i\in\left[  N\right]  .
\]
Then,
\[
\nabla\left(  s_{\lambda/\mu}\right)  =\sum_{\substack{i\in\left[  N\right]
;\\\lambda-e_{i}\in\mathcal{P}_{N}}}\left(  \ell_{i}+a\right)  s_{\left(
\lambda-e_{i}\right)  /\mu}+\sum_{\substack{i\in\left[  N\right]  ;\\\mu
+e_{i}\in\mathcal{P}_{N}}}\left(  b-m_{i}\right)  s_{\lambda/\left(  \mu
+e_{i}\right)  }.
\]

\end{theorem}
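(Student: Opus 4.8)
The plan is to push everything through the Jacobi--Trudi determinant \eqref{pf.thm.1.JT}, using only that $\nabla$ is a derivation. The one genuinely new ingredient I need is the identity
\[
\nabla\left( h_{n}\right) =\left( N+n-1\right) h_{n-1}\qquad\text{for all }n\in\mathbb{Z}.
\]
For $n\leq0$ both sides vanish, and for $n\geq1$ I would prove this directly from \eqref{eq.def.hn}: applying $\partial/\partial x_{k}$ to the monomial $x_{1}^{i_{1}}\cdots x_{N}^{i_{N}}$ and summing over all $k\in\left[ N\right] $ and over all exponent vectors with $i_{1}+\cdots+i_{N}=n$, one finds that the monomial $x_{1}^{j_{1}}\cdots x_{N}^{j_{N}}$ (for $j_{1}+\cdots+j_{N}=n-1$) ends up with coefficient $\sum_{k=1}^{N}\left( j_{k}+1\right) =\left( n-1\right) +N$. (Equivalently, one may use $\sum_{n\geq0}h_{n}t^{n}=\prod_{k}\left( 1-x_{k}t\right) ^{-1}$ together with Newton's identity $nh_{n}=\sum_{k=1}^{n}p_{k}h_{n-k}$, since $\nabla\left( h_{n}\right) =\sum_{s=0}^{n-1}p_{s}h_{n-1-s}$ with $p_{0}=N$.)

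Next I would rewrite \eqref{pf.thm.1.JT}: since $\lambda_{i}-\mu_{j}-i+j=\ell_{i}-m_{j}$, we have $s_{\lambda/\mu}=\det\left( h_{\ell_{i}-m_{j}}\right) _{i,j\in\left[ N\right] }$. For any $N\times N$ matrix $M$ over $R$, the Leibniz expansion of the determinant plus the product rule give $\nabla\left( \det M\right) =\sum_{i,j}C_{ij}\nabla\left( M_{ij}\right) $ with $C_{ij}$ the $\left( i,j\right) $-cofactor of $M$. Applying this to $M=\left( h_{\ell_{i}-m_{j}}\right) $, inserting the $h_{n}$-identity above, and splitting $N+\ell_{i}-m_{j}-1=\left( a+\ell_{i}\right) +\left( b-m_{j}\right) $ (which is where the hypothesis $a+b=N-1$ enters), I obtain
\[
\nabla\left( s_{\lambda/\mu}\right) =\sum_{i\in\left[ N\right] }\left( a+\ell_{i}\right) \sum_{j\in\left[ N\right] }C_{ij}h_{\ell_{i}-m_{j}-1}+\sum_{j\in\left[ N\right] }\left( b-m_{j}\right) \sum_{i\in\left[ N\right] }C_{ij}h_{\ell_{i}-m_{j}-1},
\]
where I have pulled the row factor $a+\ell_{i}$ (respectively the column factor $b-m_{j}$) out of the inner sums.

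The point is then that $\sum_{j}C_{ij}h_{\ell_{i}-m_{j}-1}=\sum_{j}C_{ij}h_{\left( \ell_{i}-1\right) -m_{j}}$ is the cofactor expansion along row $i$ of the matrix obtained from $M$ by replacing its $i$-th row with $\left( h_{\left( \ell_{i}-1\right) -m_{j}}\right) _{j}$, and this matrix is exactly the Jacobi--Trudi matrix attached to the $N$-tuples $\lambda-e_{i}$ and $\mu$; symmetrically, $\sum_{i}C_{ij}h_{\ell_{i}-m_{j}-1}$ is the Jacobi--Trudi matrix determinant attached to $\lambda$ and $\mu+e_{j}$. Whenever $\lambda-e_{i}\in\mathcal{P}_{N}$ (respectively $\mu+e_{j}\in\mathcal{P}_{N}$), formula \eqref{pf.thm.1.JT} identifies these determinants with $s_{\left( \lambda-e_{i}\right) /\mu}$ (respectively $s_{\lambda/\left( \mu+e_{j}\right) }$), producing precisely the two sums in the theorem.

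The main obstacle --- the only step requiring real care --- is that the expression above runs over all $i$ and all $j$, whereas the theorem restricts to those $i$ with $\lambda-e_{i}\in\mathcal{P}_{N}$ and those $j$ with $\mu+e_{j}\in\mathcal{P}_{N}$; so I must check that the remaining terms vanish. If $\lambda-e_{i}\notin\mathcal{P}_{N}$, then either $i<N$ and $\lambda_{i}=\lambda_{i+1}$, in which case $\ell_{i}-1=\ell_{i+1}$, so the modified $i$-th row coincides with the unchanged $\left( i+1\right) $-st row of the matrix (both equal $\left( h_{\ell_{i+1}-m_{j}}\right) _{j}$) and the determinant is $0$; or $i=N$ and $\lambda_{N}=0$, in which case the last row is $\left( h_{-N-1-m_{j}}\right) _{j}$, which is zero since $m_{j}=\mu_{j}-j\geq-j\geq-N$ forces every index $-N-1-m_{j}$ to be negative. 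Likewise, if $\mu+e_{j}\notin\mathcal{P}_{N}$, then necessarily $j\geq2$ and $\mu_{j-1}=\mu_{j}$, so $m_{j}+1=m_{j-1}$ and the modified $j$-th column equals the $\left( j-1\right) $-st column, again giving determinant $0$. Discarding these zero terms yields the asserted identity. Apart from this bookkeeping and the clean derivation of the $h_{n}$-identity, the argument is a purely formal manipulation of determinants.
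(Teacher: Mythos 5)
Your proposal is correct and follows essentially the same route as the paper: the same formula $\nabla(h_n)=(n+N-1)h_{n-1}$, the same rewriting of the Jacobi--Trudi determinant in terms of $\ell$ and $m$, the same splitting $N-1=a+b$ of the coefficient, and the same equal-row/equal-column and negative-index arguments to kill the terms with $\lambda-e_i\notin\mathcal{P}_N$ or $\mu+e_j\notin\mathcal{P}_N$. The only difference is packaging --- you organize the differentiation of the determinant via cofactors (Jacobi's formula), whereas the paper expands over permutations and applies the Leibniz rule to each product --- and these are the same computation.
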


\begin{example}
Let $N=3$ and $\lambda=\left(  3,2,1\right)  $ and $\mu=\left(  1,1,0\right)
$. Then, the $N$-tuples $\ell$ and $m$ defined in Theorem \ref{thm.1} are
$\ell=\left(  2,0,-2\right)  $ and $m=\left(  0,-1,-3\right)  $. Let $a$ and
$b$ be two integers such that $a+b=N-1=2$. Thus, Theorem \ref{thm.1} says that%
\begin{align*}
&  \nabla\left(  s_{\left(  3,2,1\right)  /\left(  1,1,0\right)  }\right) \\
&  =\sum_{i\in\left\{  1,2,3\right\}  }\left(  \ell_{i}+a\right)  s_{\left(
\left(  3,2,1\right)  -e_{i}\right)  /\left(  1,1,0\right)  }+\sum
_{i\in\left\{  1,3\right\}  }\left(  b-m_{i}\right)  s_{\left(  3,2,1\right)
/\left(  \left(  1,1,0\right)  +e_{i}\right)  }\\
&  =\left(  2+a\right)  s_{\left(  2,1,1\right)  /\left(  1,1,0\right)
}+\left(  0+a\right)  s_{\left(  3,1,1\right)  /\left(  1,1,0\right)
}+\left(  -2+a\right)  s_{\left(  3,2,0\right)  /\left(  1,1,0\right)  }\\
&  \ \ \ \ \ \ \ \ \ \ +\left(  b-0\right)  s_{\left(  3,2,1\right)  /\left(
2,1,0\right)  }+\left(  b-\left(  -3\right)  \right)  s_{\left(  3,2,1\right)
/\left(  1,1,1\right)  }.
\end{align*}
Note that the second sum has no $i=2$ addend, since $\mu+e_{2}=\left(
1,2,0\right)  \notin\mathcal{P}_{N}$.
\end{example}

\begin{remark}
Let us comment on the combinatorial meaning of Theorem~\ref{thm.1}. A pair
$\left(  \mu,\lambda\right)  $ of $N$-tuples $\lambda,\mu\in\mathcal{P}_{N}$
is called a \emph{skew partition} if $\mu\subseteq\lambda$ (that is, $\mu
_{i}\leq\lambda_{i}$ for each $i\in\left[  N\right]  $). Such a skew partition
$\left(  \mu,\lambda\right)  $ has a \emph{skew Young diagram} assigned to it,
which is defined as the set difference of the Young diagrams of $\lambda$ and
$\mu$. This diagram is denoted by $\lambda/\mu$.

Let $\lambda/\mu$ be a skew partition with $\lambda,\mu\in\mathcal{P}_{N}$.
The sum $\sum_{\substack{i\in\left[  N\right]  ;\\\lambda-e_{i}\in
\mathcal{P}_{N}}}\left(  \ell_{i}+a\right)  s_{\left(  \lambda-e_{i}\right)
/\mu}$ in Theorem~\ref{thm.1} can be rewritten as $\sum_{\substack{i\in\left[
N\right]  ;\\\lambda-e_{i}\in\mathcal{P}_{N};\\\mu\subseteq\lambda-e_{i}%
}}\left(  \ell_{i}+a\right)  s_{\left(  \lambda-e_{i}\right)  /\mu}$, because
any addend in which $\mu\subseteq\lambda-e_{i}$ does not hold is $0$ by
(\ref{pf.thm.1.s=0}). This is a sum over all skew partitions obtained from
$\lambda/\mu$ by removing an outer corner (i.e., a removable box on the
southeastern boundary of $\lambda/\mu$). Moreover, the number $\ell
_{i}=\lambda_{i}-i$ tells us which diagonal this corner belongs to.

Likewise, the sum $\sum_{\substack{i\in\left[  N\right]  ;\\\mu+e_{i}%
\in\mathcal{P}_{N}}}\left(  b-m_{i}\right)  s_{\lambda/\left(  \mu
+e_{i}\right)  }$ can be rewritten as $\sum_{\substack{i\in\left[  N\right]
;\\\mu+e_{i}\in\mathcal{P}_{N};\\\mu+e_{i}\subseteq\lambda}}\left(
b-m_{i}\right)  s_{\lambda/\left(  \mu+e_{i}\right)  }$, which is a sum over
all skew partitions obtained from $\lambda/\mu$ by removing an inner corner
(i.e., a removable box on the northwestern boundary of $\lambda/\mu$).
Moreover, the number $m_{i}=\mu_{i}-i$ tells us which diagonal this corner
belongs to.
\end{remark}

\begin{remark}
Let $\lambda\in\mathcal{P}_{N}$. Let $\ell_{i}:=\lambda_{i}-i$ for each
$i\in\left[  N\right]  $. Set $\mu:=\left(  0,0,\ldots,0\right)
\in\mathcal{P}_{N}$. Then, the skew Schur polynomial $s_{\lambda/\mu}$ is
usually called $s_{\lambda}$. The only $i\in\left[  N\right]  $ satisfying
$\mu+e_{i}\in\mathcal{P}_{N}$ is $1$. Hence, Theorem~\ref{thm.1} (applied to
$a=N$ and $b=-1$) yields
\begin{align*}
\nabla\left(  s_{\lambda}\right)   &  =\sum_{\substack{i\in\left[  N\right]
;\\\lambda-e_{i}\in\mathcal{P}_{N}}}\left(  \ell_{i}+N\right)  s_{\lambda
-e_{i}}+\underbrace{\left(  1-0+\left(  -1\right)  \right)  }_{=0}%
s_{\lambda/\left(  \mu+e_{1}\right)  }\\
&  =\sum_{\substack{i\in\left[  N\right]  ;\\\lambda-e_{i}\in\mathcal{P}_{N}%
}}\left(  \ell_{i}+N\right)  s_{\lambda-e_{i}},
\end{align*}
which recovers \cite[The Symmetric Derivative Rule]{Weigan23}.
\end{remark}

Once Theorem \ref{thm.1} is proved, we will derive the following curious corollary:

\begin{corollary}
\label{cor.2} Let $\lambda\in\mathcal{P}_{N}$ and $\mu\in\mathcal{P}_{N}$.
Then,
\[
\sum_{\substack{i\in\left[  N\right]  ;\\\lambda-e_{i}\in\mathcal{P}_{N}%
}}s_{\left(  \lambda-e_{i}\right)  /\mu}=\sum_{\substack{i\in\left[  N\right]
;\\\mu+e_{i}\in\mathcal{P}_{N}}}s_{\lambda/\left(  \mu+e_{i}\right)  }.
\]

\end{corollary}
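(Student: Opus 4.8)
The plan is to exploit the fact that Theorem~\ref{thm.1} holds for \emph{every} pair of integers $\left(a,b\right)$ with $a+b=N-1$, whereas its left-hand side $\nabla\left(s_{\lambda/\mu}\right)$ does not depend on the choice of $\left(a,b\right)$ at all. Hence the right-hand side must be independent of this choice as well, and comparing two different choices should isolate precisely the identity we are after. Note also that the two index sets $\left\{i\in\left[N\right]:\lambda-e_i\in\mathcal{P}_N\right\}$ and $\left\{i\in\left[N\right]:\mu+e_i\in\mathcal{P}_N\right\}$ occurring in Theorem~\ref{thm.1} do not involve $a$ or $b$.

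Concretely, I would apply Theorem~\ref{thm.1} once with $\left(a,b\right)=\left(N,-1\right)$ and once with $\left(a,b\right)=\left(N-1,0\right)$ (both pairs satisfy $a+b=N-1$). This yields the two equalities (with $\ell$ and $m$ as defined in Theorem~\ref{thm.1})
\begin{align*}
\nabla\left(s_{\lambda/\mu}\right) &=\sum_{\substack{i\in\left[N\right];\\\lambda-e_i\in\mathcal{P}_N}}\left(\ell_i+N\right)s_{\left(\lambda-e_i\right)/\mu}+\sum_{\substack{i\in\left[N\right];\\\mu+e_i\in\mathcal{P}_N}}\left(-1-m_i\right)s_{\lambda/\left(\mu+e_i\right)},\\
\nabla\left(s_{\lambda/\mu}\right) &=\sum_{\substack{i\in\left[N\right];\\\lambda-e_i\in\mathcal{P}_N}}\left(\ell_i+N-1\right)s_{\left(\lambda-e_i\right)/\mu}+\sum_{\substack{i\in\left[N\right];\\\mu+e_i\in\mathcal{P}_N}}\left(-m_i\right)s_{\lambda/\left(\mu+e_i\right)}.
\end{align*}
Subtracting the second equality from the first makes every $\ell_i$- and $m_i$-dependent term cancel, and what remains is
\[
0=\sum_{\substack{i\in\left[N\right];\\\lambda-e_i\in\mathcal{P}_N}}s_{\left(\lambda-e_i\right)/\mu}-\sum_{\substack{i\in\left[N\right];\\\mu+e_i\in\mathcal{P}_N}}s_{\lambda/\left(\mu+e_i\right)},
\]
which is exactly the claim of Corollary~\ref{cor.2}.

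I expect no genuine obstacle here: all the real content sits in Theorem~\ref{thm.1}, and the corollary is simply the assertion that a certain integer linear combination of skew Schur polynomials must vanish because a free parameter occurs on the right-hand side of Theorem~\ref{thm.1} but not in the quantity $\nabla\left(s_{\lambda/\mu}\right)$ that it computes. Equivalently, one can read off the coefficient of $a$ (or of $-b$) on the right-hand side of Theorem~\ref{thm.1}: it equals $\sum_i s_{\left(\lambda-e_i\right)/\mu}-\sum_i s_{\lambda/\left(\mu+e_i\right)}$, and it must be $0$.
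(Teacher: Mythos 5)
Your proposal is correct and is essentially identical to the paper's own proof: the paper likewise applies Theorem~\ref{thm.1} with the two parameter choices $\left(a,b\right)=\left(N-1,0\right)$ and $\left(a,b\right)=\left(N,-1\right)$ and subtracts the resulting equalities to cancel the $\ell_i$- and $m_i$-dependent terms. Your closing observation that the corollary is just the coefficient of the free parameter $a$ in the right-hand side of Theorem~\ref{thm.1} is a nice way of phrasing the same cancellation.
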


We note that Corollary \ref{cor.2} follows easily from the theory of skewing
operators (see \cite[\S 2.8]{GriRei}) and the Pieri rule.\footnote{In a
nutshell: Corollary \ref{cor.2} is obtained by taking the equality $s_{\mu
}^{\perp}\left(  s_{1}^{\perp}s_{\lambda}\right)  =\left(  s_{1}s_{\mu
}\right)  ^{\perp}s_{\lambda}$, which holds on the level of symmetric
functions in infinitely many indeterminates $x_{1},x_{2},x_{3},\ldots$ (a
consequence of \cite[Proposition 2.8.2 (ii)]{GriRei}), and expanding both of
its sides using the Pieri rules \cite[(2.7.1) and (2.8.3)]{GriRei}.} But we
shall prove both Theorem \ref{thm.1} and Corollary \ref{cor.2} elementarily.

\section{A lemma on $\nabla\left(  h_{n}\right)  $}

First we need a formula for the image of the complete homogeneous symmetric
polynomial $h_{n}$ under the operator $\nabla$:

\begin{lemma}
\label{lem.Nabla-h} Let $n$ be a integer. Then, $\nabla\left(  h_{n}\right)
=\left(  n+N-1\right)  h_{n-1}$.
\end{lemma}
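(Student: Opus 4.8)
The plan is to prove Lemma~\ref{lem.Nabla-h} directly from the explicit monomial expansion \eqref{eq.def.hn} of $h_n$. Applying $\nabla$ to that sum term by term, the operator $\partial/\partial x_k$ sends a monomial $x_1^{i_1}x_2^{i_2}\cdots x_N^{i_N}$ to $i_k\, x_1^{i_1}\cdots x_k^{i_k-1}\cdots x_N^{i_N}$, so $\nabla\bigl(x_1^{i_1}\cdots x_N^{i_N}\bigr) = \sum_{k=1}^{N} i_k\, x_1^{i_1}\cdots x_k^{i_k-1}\cdots x_N^{i_N}$. Summing this over all exponent tuples $(i_1,\ldots,i_N)\in\mathbb{N}^N$ with $i_1+\cdots+i_N=n$ gives $\nabla(h_n)$ as a sum of monomials of total degree $n-1$, each weighted by some integer coefficient. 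The goal is to show every monomial of degree $n-1$ appears with coefficient exactly $n+N-1$, which would identify $\nabla(h_n)$ with $(n+N-1)h_{n-1}$.

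First I would fix a target monomial $x_1^{j_1}x_2^{j_2}\cdots x_N^{j_N}$ with $j_1+\cdots+j_N = n-1$ and count its coefficient in $\nabla(h_n)$. It arises from the $k$-th summand $i_k\, x_1^{i_1}\cdots x_k^{i_k-1}\cdots x_N^{i_N}$ precisely when $i_k = j_k+1$ and $i_r = j_r$ for all $r\neq k$; note this source tuple $(i_1,\ldots,i_N)$ does have entry-sum $(n-1)+1 = n$, so it is a legitimate index in the sum defining $h_n$, and it lies in $\mathbb{N}^N$ since $i_k=j_k+1\geq 1$. Thus the contribution from index $k$ is $i_k = j_k+1$, and summing over all $k\in[N]$ yields coefficient $\sum_{k=1}^{N}(j_k+1) = \bigl(\sum_{k=1}^N j_k\bigr) + N = (n-1)+N = n+N-1$, independently of the chosen monomial. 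Since every degree-$(n-1)$ monomial appears with this common coefficient and no other monomials appear, we conclude $\nabla(h_n) = (n+N-1)h_{n-1}$.

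I should also dispatch the degenerate cases so the statement holds for every integer $n$. If $n \leq 0$: for $n<0$ we have $h_n = 0$ and $h_{n-1}=0$, so both sides vanish; for $n=0$ we have $h_0=1$, hence $\nabla(h_0)=\nabla(1)=0$, while the right side is $(0+N-1)h_{-1} = (N-1)\cdot 0 = 0$ as well (this is the one place where the factor $n+N-1$ multiplying the zero polynomial $h_{n-1}$ makes the identity work out even when $n+N-1$ could be nonzero). For $n=1$ the formula reads $\nabla(h_1) = N h_0 = N$, which matches $\nabla(x_1+\cdots+x_N) = N$ and is also covered by the general monomial count above (the only degree-$0$ monomial is $1$, with coefficient $\sum_{k=1}^N 1 = N$). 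I do not anticipate a serious obstacle here; the only point requiring a moment of care is the bookkeeping that the source tuple of a given target monomial is always a valid summation index — i.e.\ that shifting one exponent up by $1$ keeps all entries nonnegative and keeps the total degree equal to $n$ — which the argument above makes explicit.
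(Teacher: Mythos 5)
Your proposal is correct and is essentially the paper's own proof: the paper differentiates the monomial expansion \eqref{eq.def.hn} term by term and reindexes the sum by substituting $i_k+1$ for $i_k$, which is exactly your coefficient count showing that each degree-$(n-1)$ monomial receives the weight $\sum_{k=1}^{N}(j_k+1)=(n-1)+N=n+N-1$. Your explicit treatment of the cases $n\leq 0$ is a harmless addition (the paper's reindexed sums are simply empty there), so there is nothing to fix.
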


\begin{proof}
[Proof of Lemma \ref{lem.Nabla-h}.]For each $k\in\left[  N\right]  $, we have
\begin{align*}
\dfrac{\partial}{\partial x_{k}}h_{n}  &  =\dfrac{\partial}{\partial x_{k}%
}\sum_{\substack{\left(  i_{1},i_{2},\ldots,i_{N}\right)  \in\mathbb{N}%
^{N};\\i_{1}+i_{2}+\cdots+i_{N}=n}}x_{1}^{i_{1}}x_{2}^{i_{2}}\cdots
x_{N}^{i_{N}}\ \ \ \ \ \ \ \ \ \ \left(  \text{by \eqref{eq.def.hn}}\right) \\
&  =\sum_{\substack{\left(  i_{1},i_{2},\ldots,i_{N}\right)  \in\mathbb{N}%
^{N};\\i_{1}+i_{2}+\cdots+i_{N}=n}}\underbrace{\dfrac{\partial}{\partial
x_{k}}\left(  x_{1}^{i_{1}}x_{2}^{i_{2}}\cdots x_{N}^{i_{N}}\right)
}_{\substack{=%
\begin{cases}
i_{k}x_{1}^{i_{1}}x_{2}^{i_{2}}\cdots x_{k}^{i_{k}-1}\cdots x_{N}^{i_{N}}, &
\text{if }i_{k}>0;\\
0, & \text{if }i_{k}=0
\end{cases}
\\\text{(where }x_{1}^{i_{1}}x_{2}^{i_{2}}\cdots x_{k}^{i_{k}-1}\cdots
x_{N}^{i_{N}}\text{ means the monomial }x_{1}^{i_{1}}x_{2}^{i_{2}}\cdots
x_{N}^{i_{N}}\\\text{with the exponent on }x_{k}\text{ decremented by
}1\text{)}}}\\
&  =\sum_{\substack{\left(  i_{1},i_{2},\ldots,i_{N}\right)  \in\mathbb{N}%
^{N};\\i_{1}+i_{2}+\cdots+i_{N}=n;\\i_{k}>0}}i_{k}x_{1}^{i_{1}}x_{2}^{i_{2}%
}\cdots x_{k}^{i_{k}-1}\cdots x_{N}^{i_{N}}\\
&  =\underbrace{\sum_{\substack{\left(  i_{1},i_{2},\ldots,i_{N}\right)
\in\mathbb{N}^{N};\\i_{1}+i_{2}+\cdots+\left(  i_{k}+1\right)  +\cdots
+i_{N}=n}}}_{=\sum_{\substack{\left(  i_{1},i_{2},\ldots,i_{N}\right)
\in\mathbb{N}^{N};\\i_{1}+i_{2}+\cdots+i_{N}=n-1}}}\left(  i_{k}+1\right)
\underbrace{x_{1}^{i_{1}}x_{2}^{i_{2}}\cdots x_{k}^{\left(  i_{k}+1\right)
-1}\cdots x_{N}^{i_{N}}}_{=x_{1}^{i_{1}}x_{2}^{i_{2}}\cdots x_{N}^{i_{N}}}\\
&  \ \ \ \ \ \ \ \ \ \ \ \ \ \ \ \ \ \ \ \ \left(  \text{here, we substituted
}i_{k}+1\text{ for }i_{k}\text{ in the sum}\right) \\
&  =\sum_{\substack{\left(  i_{1},i_{2},\ldots,i_{N}\right)  \in\mathbb{N}%
^{N};\\i_{1}+i_{2}+\cdots+i_{N}=n-1}}\left(  i_{k}+1\right)  x_{1}^{i_{1}%
}x_{2}^{i_{2}}\cdots x_{N}^{i_{N}}.
\end{align*}

Summing this equality over all $k\in\left[  N\right]  $, we obtain%
\begin{align*}
\sum_{k\in\left[  N\right]  }\dfrac{\partial}{\partial x_{k}}h_{n}  &
=\sum_{k\in\left[  N\right]  }\ \ \sum_{\substack{\left(  i_{1},i_{2}%
,\ldots,i_{N}\right)  \in\mathbb{N}^{N};\\i_{1}+i_{2}+\cdots+i_{N}%
=n-1}}\left(  i_{k}+1\right)  x_{1}^{i_{1}}x_{2}^{i_{2}}\cdots x_{N}^{i_{N}}\\
&  =\sum_{\substack{\left(  i_{1},i_{2},\ldots,i_{N}\right)  \in\mathbb{N}%
^{N};\\i_{1}+i_{2}+\cdots+i_{N}=n-1}}\underbrace{\left(  \sum_{k\in\left[
N\right]  }\left(  i_{k}+1\right)  \right)  }_{\substack{=\left(
i_{1}+1\right)  +\left(  i_{2}+1\right)  +\cdots+\left(  i_{N}+1\right)
\\=\left(  i_{1}+i_{2}+\cdots+i_{N}\right)  +N\\=n-1+N\\\text{(since }%
i_{1}+i_{2}+\cdots+i_{N}=n-1\text{)}}}x_{1}^{i_{1}}x_{2}^{i_{2}}\cdots
x_{N}^{i_{N}}\\
&  =\underbrace{\left(  n-1+N\right)  }_{=n+N-1}\underbrace{\sum
_{\substack{\left(  i_{1},i_{2},\ldots,i_{N}\right)  \in\mathbb{N}^{N}%
;\\i_{1}+i_{2}+\cdots+i_{N}=n-1}}x_{1}^{i_{1}}x_{2}^{i_{2}}\cdots x_{N}%
^{i_{N}}}_{\substack{=h_{n-1}\\\text{(by the definition of }h_{n-1}\text{)}%
}}=\left(  n+N-1\right)  h_{n-1}.
\end{align*}
This can be rewritten as $\nabla\left(  h_{n}\right)  =\left(  n+N-1\right)
h_{n-1}$ (since $\nabla=\dfrac{\partial}{\partial x_{1}}+\dfrac{\partial
}{\partial x_{2}}+\cdots+\dfrac{\partial}{\partial x_{N}}=\sum_{k\in\left[
N\right]  }\dfrac{\partial}{\partial x_{k}}$). Thus, Lemma \ref{lem.Nabla-h}
is proved.
\end{proof}

\section{Lemmas on determinants}

We will next need a few simple lemmas about determinants:

\begin{lemma}
\label{lem.det.s}Let $\lambda,\mu\in\mathcal{P}_{N}$. Define $\ell
,m\in\mathbb{Z}^{N}$ as in Theorem \ref{thm.1}. Then,%
\[
\det\left(  h_{\ell_{i}-m_{j}}\right)  _{i,j\in\left[  N\right]  }%
=s_{\lambda/\mu}.
\]

\end{lemma}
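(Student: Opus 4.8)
The plan is to observe that the matrix appearing in Lemma~\ref{lem.det.s} is literally the very same matrix as the one in the Jacobi--Trudi formula \eqref{pf.thm.1.JT}, so that the claim reduces to that formula with essentially no further work.

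Concretely, I would first compute, for arbitrary $i,j\in\left[  N\right]  $, the subscript $\ell_{i}-m_{j}$. By the definitions of $\ell$ and $m$ in Theorem~\ref{thm.1}, we have $\ell_{i}=\lambda_{i}-i$ and $m_{j}=\mu_{j}-j$, hence
\[
\ell_{i}-m_{j}=\left(  \lambda_{i}-i\right)  -\left(  \mu_{j}-j\right)
=\lambda_{i}-\mu_{j}-i+j .
\]
Consequently $h_{\ell_{i}-m_{j}}=h_{\lambda_{i}-\mu_{j}-i+j}$ for all $i,j\in\left[  N\right]  $, so the two $N\times N$-matrices $\left(  h_{\ell_{i}-m_{j}}\right)  _{i,j\in\left[  N\right]  }$ and $\left(  h_{\lambda_{i}-\mu_{j}-i+j}\right)  _{i,j\in\left[  N\right]  }$ are equal. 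Taking determinants and invoking \eqref{pf.thm.1.JT} then yields $\det\left(  h_{\ell_{i}-m_{j}}\right)  _{i,j\in\left[  N\right]  }=\det\left(  h_{\lambda_{i}-\mu_{j}-i+j}\right)  _{i,j\in\left[  N\right]  }=s_{\lambda/\mu}$, which is the assertion.

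I do not expect any genuine obstacle here: the lemma is simply a reformulation of the Jacobi--Trudi formula in the shifted coordinates $\ell$ and $m$, which are introduced precisely so that subsequent steps (such as applying $\nabla$ to each $h$ via Lemma~\ref{lem.Nabla-h}, which decrements the subscript by $1$ and hence acts cleanly on $\ell$ and $m$) become notationally lighter. The only point requiring a modicum of care is the sign bookkeeping in the index $\ell_{i}-m_{j}$, which is why I would spell out the displayed identity above explicitly rather than leave it to the reader.
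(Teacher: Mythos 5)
Your proof is correct and follows exactly the same route as the paper: compute $\ell_{i}-m_{j}=\lambda_{i}-\mu_{j}-i+j$ from the definitions and then invoke the Jacobi--Trudi formula \eqref{pf.thm.1.JT}. Nothing is missing.
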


\begin{proof}
[Proof of Lemma \ref{lem.det.s}.]For every $i,j\in\left[  N\right]  $, we have
$\ell_{i}=\lambda_{i}-i$ (by the definition of $\ell$) and $m_{j}=\mu_{j}-j$
(similarly). Subtracting these two equalities from each other, we obtain that%
\[
\ell_{i}-m_{j}=\lambda_{i}-i-\left(  \mu_{j}-j\right)  =\lambda_{i}-\mu
_{j}-i+j\ \ \ \ \ \ \ \ \ \ \text{for every }i,j\in\left[  N\right]  .
\]
Hence, we can rewrite (\ref{pf.thm.1.JT}) as $s_{\lambda/\mu}=\det\left(
h_{\ell_{i}-m_{j}}\right)  _{i,j\in\left[  N\right]  }$. This proves Lemma
\ref{lem.det.s}.
\end{proof}

\begin{lemma}
\label{lem.det1.rows} Let $\lambda,\mu\in\mathcal{P}_{N}$ and $k\in\left[
N\right]  $ be such that $\lambda-e_{k}\in\mathcal{P}_{N}$. Define $\ell
,m\in\mathbb{Z}^{N}$ as in Theorem \ref{thm.1}. Then,
\[
\det\left(  h_{\left(  \ell-e_{k}\right)  _{i}-m_{j}}\right)  _{i,j\in\left[
N\right]  }=s_{\left(  \lambda-e_{k}\right)  /\mu}.
\]

\end{lemma}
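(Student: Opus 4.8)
The plan is to reduce Lemma~\ref{lem.det1.rows} to Lemma~\ref{lem.det.s} by recognizing that replacing $\ell$ by $\ell - e_k$ is exactly the substitution that turns the Jacobi--Trudi matrix of $\lambda/\mu$ into that of $(\lambda - e_k)/\mu$. First I would observe that, setting $\lambda' := \lambda - e_k$, we have $\lambda' \in \mathcal{P}_N$ by hypothesis, so Lemma~\ref{lem.det.s} applies to the pair $(\lambda', \mu)$. Defining $\ell', m' \in \mathbb{Z}^N$ from $(\lambda',\mu)$ as in Theorem~\ref{thm.1}, i.e. $\ell'_i := \lambda'_i - i$ and $m'_i := \mu_i - i$, Lemma~\ref{lem.det.s} gives $\det\left( h_{\ell'_i - m'_j} \right)_{i,j\in[N]} = s_{\lambda'/\mu} = s_{(\lambda - e_k)/\mu}$.

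The remaining task is purely bookkeeping: I would check that $\ell' = \ell - e_k$ and $m' = m$. For the second equality, $m'_i = \mu_i - i = m_i$ for every $i \in [N]$, so $m' = m$. For the first, note that $\lambda'_i = \lambda_i - (e_k)_i$, hence $\ell'_i = \lambda'_i - i = \lambda_i - (e_k)_i - i = (\lambda_i - i) - (e_k)_i = \ell_i - (e_k)_i = (\ell - e_k)_i$ for every $i \in [N]$; thus $\ell' = \ell - e_k$. Substituting these two identities into the equality from the previous paragraph yields $\det\left( h_{(\ell - e_k)_i - m_j} \right)_{i,j\in[N]} = s_{(\lambda - e_k)/\mu}$, which is exactly the claim.

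There is essentially no obstacle here; the only thing to be slightly careful about is that Lemma~\ref{lem.det.s} is stated for an arbitrary pair of partitions in $\mathcal{P}_N$, so one must invoke the hypothesis $\lambda - e_k \in \mathcal{P}_N$ precisely to license its use for the pair $(\lambda - e_k, \mu)$. (The hypothesis $\mu \in \mathcal{P}_N$ is given outright.) I would write this up in a handful of lines, mirroring the structure of the proof of Lemma~\ref{lem.det.s}, since the statement is really just that lemma applied to a shifted partition. An analogous lemma for the $\mu$-side (with $\mu + e_k$ in place of $\mu$, affecting columns rather than rows) would presumably follow by the same argument, and I would expect the paper to state it next.
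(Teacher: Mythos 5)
Your proposal is correct and matches the paper's own proof: both reduce the claim to Lemma~\ref{lem.det.s} applied to the pair $\left(\lambda-e_{k},\mu\right)$, after checking that the $N$-tuple produced from $\lambda-e_{k}$ by the recipe of Theorem~\ref{thm.1} is precisely $\ell-e_{k}$ (and that the $\mu$-side data is unchanged). Your explicit entrywise verification of $\ell'=\ell-e_{k}$ is just a more detailed rendering of the paper's one-line observation, and your anticipated column analogue is indeed the next lemma in the paper.
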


\begin{proof}
[Proof of Lemma \ref{lem.det1.rows}.]Recall that the $N$-tuple $\ell$ is
defined from the $N$-tuple $\lambda$ by subtracting $1$ from its $1$-st entry,
subtracting $2$ from its $2$-nd entry, subtracting $3$ from its $3$-rd entry,
etc.. Thus, the $N$-tuple $\ell-e_{k}$ is obtained from $\lambda-e_{k}$ in the
same way. Hence, Lemma \ref{lem.det.s} (applied to $\lambda-e_{k}$ and
$\ell-e_{k}$ instead of $\lambda$ and $\ell$) yields%
\[
\det\left(  h_{\left(  \ell-e_{k}\right)  _{i}-m_{j}}\right)  _{i,j\in\left[
N\right]  }=s_{\left(  \lambda-e_{k}\right)  /\mu}.
\]
This proves Lemma \ref{lem.det1.rows}.
\end{proof}

\begin{lemma}
\label{lem.det1.cols} Let $\lambda,\mu\in\mathcal{P}_{N}$ and $k\in\left[
N\right]  $ be such that $\mu+e_{k}\in\mathcal{P}_{N}$. Define $\ell
,m\in\mathbb{Z}^{N}$ as in Theorem \ref{thm.1}. Then,
\[
\det\left(  h_{\ell_{i}-\left(  m+e_{k}\right)  _{j}}\right)  _{i,j\in\left[
N\right]  }=s_{\lambda/\left(  \mu+e_{k}\right)  }.
\]

\end{lemma}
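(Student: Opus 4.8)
The plan is to mimic the proof of Lemma~\ref{lem.det1.rows} almost verbatim, since Lemma~\ref{lem.det1.cols} is the column-analogue of the row-statement just proved. Recall that the $N$-tuple $m$ is obtained from $\mu$ by subtracting $j$ from its $j$-th entry for each $j\in\left[  N\right]  $; hence the $N$-tuple $m+e_{k}$ is obtained from $\mu+e_{k}$ in exactly the same way. Since $\mu+e_{k}\in\mathcal{P}_{N}$ by hypothesis, we may apply Lemma~\ref{lem.det.s} to $\mu+e_{k}$ and $m+e_{k}$ in the roles of $\mu$ and $m$ (keeping $\lambda$ and $\ell$ unchanged), and this immediately yields
\[
\det\left(  h_{\ell_{i}-\left(  m+e_{k}\right)  _{j}}\right)  _{i,j\in\left[  N\right]  }=s_{\lambda/\left(  \mu+e_{k}\right)  },
\]
which is the claim.

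The only point requiring a moment's care is that Lemma~\ref{lem.det.s} is stated with two partitions $\lambda,\mu\in\mathcal{P}_{N}$ and the derived tuples $\ell,m$; here we are invoking it with the \emph{second} partition replaced by $\mu+e_{k}$, so we must check that $\mu+e_{k}\in\mathcal{P}_{N}$ (which is precisely the standing hypothesis of Lemma~\ref{lem.det1.cols}) and that the tuple paired with $\mu+e_{k}$ in that application is indeed its associated $m$-tuple, i.e.\ the tuple whose $j$-th entry is $\left(  \mu+e_{k}\right)  _{j}-j$. Since $\left(  \mu+e_{k}\right)  _{j}-j=\left(  \mu_{j}-j\right)  +\left(  e_{k}\right)  _{j}=m_{j}+\left(  e_{k}\right)  _{j}=\left(  m+e_{k}\right)  _{j}$ for every $j\in\left[  N\right]  $, this tuple is exactly $m+e_{k}$, as needed. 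There is no genuine obstacle: the argument is purely bookkeeping, transporting Lemma~\ref{lem.det.s} through the substitution $\mu\mapsto\mu+e_{k}$, and the symmetry between rows and columns in the determinant means the column case is no harder than the row case already handled in Lemma~\ref{lem.det1.rows}.
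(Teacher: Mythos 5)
Your proof is correct and is essentially identical to the paper's own argument: the paper likewise deduces Lemma~\ref{lem.det1.cols} by applying Lemma~\ref{lem.det.s} to $\mu+e_{k}$ and $m+e_{k}$ in place of $\mu$ and $m$, and your verification that $\left(\mu+e_{k}\right)_{j}-j=\left(m+e_{k}\right)_{j}$ is exactly the bookkeeping step the paper leaves implicit.
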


\begin{proof}
[Proof of Lemma \ref{lem.det1.cols}.]Similarly to Lemma \ref{lem.det1.rows},
we can show this by applying Lemma \ref{lem.det.s} to $\mu+e_{k}$ and
$m+e_{k}$ instead of $\mu$ and $m$.
\end{proof}

\begin{lemma}
\label{lem.det0.rows} Let $\lambda,\mu\in\mathcal{P}_{N}$ and $k\in\left[
N\right]  $ be such that $\lambda-e_{k}\notin\mathcal{P}_{N}$. Define
$\ell,m\in\mathbb{Z}^{N}$ as in Theorem \ref{thm.1}. Then,
\[
\det\left(  h_{\left(  \ell-e_{k}\right)  _{i}-m_{j}}\right)  _{i,j\in\left[
N\right]  }=0.
\]

\end{lemma}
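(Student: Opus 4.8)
The plan is to analyze what goes wrong combinatorially when $\lambda - e_k \notin \mathcal{P}_N$ and show it forces a determinantal collapse. First I would recall that $\lambda \in \mathcal{P}_N$ means $\lambda_1 \geq \lambda_2 \geq \cdots \geq \lambda_N \geq 0$, so subtracting $1$ from the $k$-th entry breaks membership in $\mathcal{P}_N$ in exactly one of two ways: either $\lambda_k - 1 < \lambda_{k+1}$ (which, combined with $\lambda_k \geq \lambda_{k+1}$, forces $\lambda_k = \lambda_{k+1}$), or $k = N$ and $\lambda_N - 1 < 0$, i.e.\ $\lambda_N = 0$. I would handle these two cases separately.

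In the first case ($k < N$ and $\lambda_k = \lambda_{k+1}$), I would show that rows $k$ and $k+1$ of the matrix $\left( h_{(\ell - e_k)_i - m_j} \right)_{i,j \in [N]}$ are equal. Indeed, $(\ell - e_k)_k = \ell_k - 1 = \lambda_k - k - 1$ while $(\ell - e_k)_{k+1} = \ell_{k+1} = \lambda_{k+1} - (k+1) = \lambda_k - k - 1$, so the two row-index shifts coincide; since the column data $m_j$ is untouched, rows $k$ and $k+1$ are literally identical, and the determinant vanishes. In the second case ($k = N$ and $\lambda_N = 0$), I would show the last row of the matrix is zero: for each $j$, the entry is $h_{(\ell - e_N)_N - m_j} = h_{\lambda_N - N - 1 - m_j} = h_{-N - 1 - m_j}$, and since $m_j = \mu_j - j$ with $\mu_j \geq 0$ and $j \leq N$, we get $m_j \geq -N$, hence $-N - 1 - m_j \leq -1 < 0$, so every such $h$ is $0$ by the convention that $h_n = 0$ for negative $n$. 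A determinant with a zero row is $0$.

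I expect the only real subtlety to be the careful case split and the index bookkeeping — in particular making sure the inequality $\lambda_k - 1 < \lambda_{k+1}$ really does collapse to $\lambda_k = \lambda_{k+1}$ using $\lambda \in \mathcal{P}_N$, and that the $k = N$ boundary case is not overlooked. Everything else is a one-line determinant fact (equal rows, or a zero row). The argument is short enough that I would present it inline rather than invoking further lemmas.

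\begin{proof}
[Proof of Lemma \ref{lem.det0.rows}.]We know that $\lambda\in\mathcal{P}_{N}$, so that $\lambda_{1}\geq\lambda_{2}\geq\cdots\geq\lambda_{N}\geq0$. However, $\lambda-e_{k}\notin\mathcal{P}_{N}$; this means that the $N$-tuple $\lambda-e_{k}$ (which is obtained from $\lambda$ by decrementing the $k$-th entry) fails to satisfy the chain of inequalities defining $\mathcal{P}_{N}$. Since $\lambda$ itself does satisfy these inequalities, this failure can only be caused by the decremented entry $\lambda_{k}-1$. Thus, one of the following two cases occurs:

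\textit{Case 1: We have $k<N$ and $\lambda_{k}-1<\lambda_{k+1}$.} In this case, $\lambda_{k}\leq\lambda_{k+1}$; combined with $\lambda_{k}\geq\lambda_{k+1}$ (which holds since $\lambda\in\mathcal{P}_{N}$), this yields $\lambda_{k}=\lambda_{k+1}$. Hence,
\[
\left(  \ell-e_{k}\right)  _{k}=\ell_{k}-1=\lambda_{k}-k-1=\lambda_{k+1}-\left(  k+1\right)  =\ell_{k+1}=\left(  \ell-e_{k}\right)  _{k+1}
\]
(the last equality because $k+1\neq k$). Therefore, the $k$-th and $\left(  k+1\right)$-st rows of the matrix $\left(  h_{\left(  \ell-e_{k}\right)  _{i}-m_{j}}\right)  _{i,j\in\left[  N\right]  }$ are equal (since $\left(  \ell-e_{k}\right)  _{k}=\left(  \ell-e_{k}\right)  _{k+1}$ and the column data $m_{j}$ does not depend on $i$). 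A determinant with two equal rows is $0$, so $\det\left(  h_{\left(  \ell-e_{k}\right)  _{i}-m_{j}}\right)  _{i,j\in\left[  N\right]  }=0$.

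\textit{Case 2: We have $k=N$ and $\lambda_{N}-1<0$.} In this case, $\lambda_{N}\leq0$; combined with $\lambda_{N}\geq0$, this yields $\lambda_{N}=0$. For each $j\in\left[  N\right]  $, we have $m_{j}=\mu_{j}-j\geq0-N=-N$ (since $\mu_{j}\geq0$ and $j\leq N$), so that
\[
\left(  \ell-e_{N}\right)  _{N}-m_{j}=\ell_{N}-1-m_{j}=\lambda_{N}-N-1-m_{j}=-N-1-m_{j}\leq-N-1+N=-1<0,
\]
and hence $h_{\left(  \ell-e_{N}\right)  _{N}-m_{j}}=0$. Thus the $N$-th row of the matrix $\left(  h_{\left(  \ell-e_{k}\right)  _{i}-m_{j}}\right)  _{i,j\in\left[  N\right]  }$ is zero, so its determinant is $0$.

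In both cases, $\det\left(  h_{\left(  \ell-e_{k}\right)  _{i}-m_{j}}\right)  _{i,j\in\left[  N\right]  }=0$. This proves Lemma \ref{lem.det0.rows}.
\end{proof}
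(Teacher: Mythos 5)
Your proof is correct and follows essentially the same route as the paper's: split into the case $k<N$ (where $\lambda_k=\lambda_{k+1}$ forces rows $k$ and $k+1$ of the matrix to coincide) and the case $k=N$ (where $\lambda_N=0$ forces the last row to vanish because every index drops below $0$). The only cosmetic difference is that the paper works with the bound $\lambda_N<1$ rather than pinning down $\lambda_N=0$; the argument is otherwise identical.
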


\begin{proof}
[Proof of Lemma \ref{lem.det0.rows}.]The definition of $e_{k}$ yields that the
only nonzero entry of $e_{k}$ is $\left(  e_{k}\right)  _{k}=1$. Hence, in
particular, $\left(  e_{k}\right)  _{k+1}=0$.

We have $\lambda\in\mathcal{P}_{N}$, so that $\lambda_{1}\geq\lambda_{2}%
\geq\cdots\geq\lambda_{N}\geq0$. Recall that $k\in\left[  N\right]  $, so that
$k\leq N$. Hence, we are in one of the following two cases:

\textit{Case 1:} We have $k<N$.

\textit{Case 2:} We have $k=N$.

Let us first consider Case 1. In this case, we have $k<N$. The definition of
$e_{k}$ yields $\lambda-e_{k}=\left(  \lambda_{1},\lambda_{2},\ldots
,\lambda_{k-1},\lambda_{k}-1,\lambda_{k+1},\ldots,\lambda_{N}\right)  $ (this
is the $N$-tuple $\lambda$ with its $k$-th entry decreased by $1$). Thus, the
chain of inequalities
\[
\lambda_{1}\geq\lambda_{2}\geq\cdots\geq\lambda_{k-1}\geq\lambda_{k}%
-1\geq\lambda_{k+1}\geq\cdots\geq\lambda_{N}\geq0\quad\text{\textbf{does not}
hold}%
\]
(since $\lambda-e_{k}\notin\mathcal{P}_{N}$). Therefore, the inequality
$\lambda_{k}-1\geq\lambda_{k+1}$ must be violated (since all the other
inequality signs in this chain follow from $\lambda_{1}\geq\lambda_{2}%
\geq\cdots\geq\lambda_{N}\geq0$). In other words, we have $\lambda
_{k}-1<\lambda_{k+1}$. Since $\lambda_{k}$ and $\lambda_{k+1}$ are integers,
this entails $\lambda_{k}-1\leq\lambda_{k+1}-1$, so that $\lambda_{k}%
\leq\lambda_{k+1}$. Combining this with $\lambda_{k}\geq\lambda_{k+1}$ (which
follows from $\lambda_{1}\geq\lambda_{2}\geq\cdots\geq\lambda_{N}$), we obtain
$\lambda_{k}=\lambda_{k+1}$.

Now, the definition of $\ell$ yields $\ell_{k}=\lambda_{k}-k$. Hence,
\[
\left(  \ell-e_{k}\right)  _{k}=\underbrace{\ell_{k}}_{=\lambda_{k}%
-k}-\underbrace{\left(  e_{k}\right)  _{k}}_{=1}=\underbrace{\lambda_{k}%
}_{=\lambda_{k+1}}-\,k-1=\lambda_{k+1}-k-1.
\]
Furthermore, the definition of $\ell$ yields $\ell_{k+1}=\lambda_{k+1}-\left(
k+1\right)  $. Hence,%
\[
\left(  \ell-e_{k}\right)  _{k+1}=\ell_{k+1}-\underbrace{\left(  e_{k}\right)
_{k+1}}_{=0}=\ell_{k+1}=\lambda_{k+1}-\left(  k+1\right)  =\lambda_{k+1}-k-1.
\]
Comparing this with $\left(  \ell-e_{k}\right)  _{k}=\lambda_{k+1}-k-1$, we
obtain $\left(  \ell-e_{k}\right)  _{k}=\left(  \ell-e_{k}\right)  _{k+1}$.

Hence, for each $j\in\left[  N\right]  $, we have%
\[
h_{\left(  \ell-e_{k}\right)  _{k}-m_{j}}=h_{\left(  \ell-e_{k}\right)
_{k+1}-m_{j}}.
\]
In other words, each entry in the $k$-th row of the matrix $\left(  h_{\left(
\ell-e_{k}\right)  _{i}-m_{j}}\right)  _{i,j\in\left[  N\right]  }$ equals the
corresponding entry in the $\left(  k+1\right)  $-st row of this matrix. Thus,
the matrix $\left(  h_{\left(  \ell-e_{k}\right)  _{i}-m_{j}}\right)
_{i,j\in\left[  N\right]  }$ has two equal rows (namely, its $k$-th and
$\left(  k+1\right)  $-st rows). Hence, its determinant is $0$. This proves
Lemma \ref{lem.det0.rows} in Case 1.

Let us now consider Case 2. In this case, we have $k=N$. Hence, $\lambda
-e_{k}=\lambda-e_{N}=\left(  \lambda_{1},\lambda_{2},\ldots,\lambda
_{N-1},\lambda_{N}-1\right)  $ (this is the $N$-tuple $\lambda$ with its
$N$-th entry decreased by $1$). Thus, the chain of inequalities
\[
\lambda_{1}\geq\lambda_{2}\geq\cdots\geq\lambda_{N-1}\geq\lambda_{N}%
-1\geq0\quad\text{\textbf{does not} hold}%
\]
(since $\lambda-e_{k}\notin\mathcal{P}_{N}$). Therefore, the inequality
$\lambda_{N}-1\geq0$ must be violated (since all the other inequality signs in
this chain follow from $\lambda_{1}\geq\lambda_{2}\geq\cdots\geq\lambda
_{N}\geq0$). In other words, we have $\lambda_{N}-1<0$. In other words,
$\lambda_{k}-1<0$ (since $k=N$). Thus, $\lambda_{k}<1$. Now,
\[
\left(  \ell-e_{k}\right)  _{k}=\underbrace{\ell_{k}}_{\substack{=\lambda
_{k}-k\\\text{(by the definition of }\ell\text{)}}}-\underbrace{\left(
e_{k}\right)  _{k}}_{=1}=\underbrace{\lambda_{k}}_{<1}-\underbrace{k}%
_{=N}-\,1<1-N-1=-N
\]

On the other hand, for each $j\in\left[  N\right]  $, we have $m_{j}=\mu
_{j}-j$ (by the definition of $m$) and thus $m_{j}=\underbrace{\mu_{j}}%
_{\geq0}-\underbrace{j}_{\leq N}\geq0-N=-N>\left(  \ell-e_{k}\right)  _{k}$
(since $\left(  \ell-e_{k}\right)  _{k}<-N$). Thus, for each $j\in\left[
N\right]  $, we have $\left(  \ell-e_{k}\right)  _{k}-m_{j}<0$ and therefore%
\[
h_{\left(  \ell-e_{k}\right)  _{k}-m_{j}}=0\ \ \ \ \ \ \ \ \ \ \left(
\text{since }h_{i}=0\text{ for all }i<0\right)  .
\]
In other words, each entry in the $k$-th row of the matrix $\left(  h_{\left(
\ell-e_{k}\right)  _{i}-m_{j}}\right)  _{i,j\in\left[  N\right]  }$ is zero.
Thus, the matrix $\left(  h_{\left(  \ell-e_{k}\right)  _{i}-m_{j}}\right)
_{i,j\in\left[  N\right]  }$ has a zero row (namely, its $k$-th row). Hence,
its determinant is $0$. This proves Lemma \ref{lem.det0.rows} in Case 2.

We have now proved Lemma \ref{lem.det0.rows} in both cases.
\end{proof}

\begin{lemma}
\label{lem.det0.cols} Let $\lambda,\mu\in\mathcal{P}_{N}$ and $k\in\left[
N\right]  $ be such that $\mu+e_{k}\notin\mathcal{P}_{N}$. Define $\ell
,m\in\mathbb{Z}^{N}$ as in Theorem \ref{thm.1}. Then,
\[
\det\left(  h_{\ell_{i}-\left(  m+e_{k}\right)  _{j}}\right)  _{i,j\in\left[
N\right]  }=0.
\]

\end{lemma}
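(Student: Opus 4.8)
The plan is to imitate the proof of Lemma~\ref{lem.det0.rows}, replacing rows by columns and the hypothesis $\lambda-e_{k}\notin\mathcal{P}_{N}$ by $\mu+e_{k}\notin\mathcal{P}_{N}$. The first observation is that ``Case~2'' of that proof has no counterpart here: since $\mu\in\mathcal{P}_{N}$ gives $\mu_{1}+1\geq\mu_{1}\geq\mu_{2}\geq\cdots\geq\mu_{N}\geq0$, the tuple $\mu+e_{1}$ always lies in $\mathcal{P}_{N}$, so the hypothesis $\mu+e_{k}\notin\mathcal{P}_{N}$ forces $k>1$.

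Granted $k>1$, I would write $\mu+e_{k}=\left(\mu_{1},\ldots,\mu_{k-1},\mu_{k}+1,\mu_{k+1},\ldots,\mu_{N}\right)$ and note that the chain $\mu_{1}\geq\cdots\geq\mu_{k-1}\geq\mu_{k}+1\geq\mu_{k+1}\geq\cdots\geq\mu_{N}\geq0$ fails (since $\mu+e_{k}\notin\mathcal{P}_{N}$). All of its inequalities except possibly $\mu_{k-1}\geq\mu_{k}+1$ are consequences of $\mu\in\mathcal{P}_{N}$ (in particular $\mu_{k}+1\geq\mu_{k}\geq\mu_{k+1}$, and $\mu_{k}+1\geq0$ when $k=N$), so $\mu_{k-1}\geq\mu_{k}+1$ is the inequality that fails. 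Since $\mu_{k-1}$ and $\mu_{k}$ are integers, this gives $\mu_{k-1}\leq\mu_{k}$; combining with $\mu_{k-1}\geq\mu_{k}$ yields $\mu_{k-1}=\mu_{k}$.

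Then I would compute, using $\left(e_{k}\right)_{k}=1$, $\left(e_{k}\right)_{k-1}=0$ and $m_{i}=\mu_{i}-i$, that $\left(m+e_{k}\right)_{k}=\mu_{k}-k+1$ and $\left(m+e_{k}\right)_{k-1}=\mu_{k-1}-\left(k-1\right)=\mu_{k}-k+1$, hence $\left(m+e_{k}\right)_{k-1}=\left(m+e_{k}\right)_{k}$. Therefore $h_{\ell_{i}-\left(m+e_{k}\right)_{k-1}}=h_{\ell_{i}-\left(m+e_{k}\right)_{k}}$ for every $i\in\left[N\right]$; that is, the $\left(k-1\right)$-st and $k$-th columns of the matrix $\left(h_{\ell_{i}-\left(m+e_{k}\right)_{j}}\right)_{i,j\in\left[N\right]}$ coincide. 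A determinant with two equal columns vanishes, which is exactly the claim of Lemma~\ref{lem.det0.cols}.

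I do not anticipate any genuine obstacle here. The only point requiring care is the reduction to the case $k>1$, which is what makes the argument a clean ``two equal columns'' argument and eliminates the boundary case ($k=N$) that needed a separate treatment in Lemma~\ref{lem.det0.rows}.
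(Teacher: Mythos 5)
Your proof is correct and takes essentially the same approach as the paper: the paper likewise deduces $k>1$ and $\mu_{k-1}=\mu_{k}$ from the failure of the chain of inequalities for $\mu+e_{k}$, computes $\left(m+e_{k}\right)_{k-1}=\left(m+e_{k}\right)_{k}$, and concludes that the $\left(k-1\right)$-st and $k$-th columns coincide. Your preliminary observation that $\mu+e_{1}\in\mathcal{P}_{N}$ always holds (so no analogue of the boundary Case~2 of Lemma~\ref{lem.det0.rows} arises) is a nice way of organizing the same argument.
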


\begin{proof}
[Proof of Lemma \ref{lem.det0.cols}.]This is similar to Case 2 in the proof of
Lemma \ref{lem.det0.rows}. Here are the details:

The definition of $e_{k}$ yields $\mu+e_{k}=\left(  \mu_{1},\mu_{2},\ldots
,\mu_{k-1},\mu_{k}+1,\mu_{k+1},\ldots,\mu_{N}\right)  $ (this is the $N$-tuple
$\mu$ with its $k$-th entry increased by $1$).

We have $\mu\in\mathcal{P}_{N}$, so that $\mu_{1}\geq\mu_{2}\geq\cdots\geq
\mu_{N}\geq0$. However, we have $\mu+e_{k}\notin\mathcal{P}_{N}$, so that the
chain of inequalities
\[
\mu_{1}\geq\mu_{2}\geq\cdots\geq\mu_{k-1}\geq\mu_{k}+1\geq\mu_{k+1}\geq
\cdots\geq\mu_{N}\geq0\quad\text{\textbf{does not} hold}%
\]
(since $\mu+e_{k}=\left(  \mu_{1},\mu_{2},\ldots,\mu_{k-1},\mu_{k}+1,\mu
_{k+1},\ldots,\mu_{N}\right)  $). Hence, the inequality $\mu_{k-1}\geq\mu
_{k}+1$ must be violated (since all the other inequalities in this chain
follow from $\mu_{1}\geq\mu_{2}\geq\cdots\geq\mu_{N}\geq0$). In other words,
we must have $k>1$ and $\mu_{k-1}<\mu_{k}+1$.

From $\mu_{k-1}<\mu_{k}+1$, we obtain $\mu_{k-1}\leq\mu_{k}$ (since $\mu
_{k-1}$ and $\mu_{k}$ are integers). Combining this with $\mu_{k-1}\geq\mu
_{k}$ (since $\mu_{1}\geq\mu_{2}\geq\cdots\geq\mu_{N}$), we obtain $\mu
_{k-1}=\mu_{k}$.

The definition of $e_{k}$ yields that the only nonzero entry of $e_{k}$ is
$\left(  e_{k}\right)  _{k}=1$. Hence, in particular, $\left(  e_{k}\right)
_{k-1}=0$.

The definition of $m$ yields $m_{k}=\mu_{k}-k$ and $m_{k-1}=\mu_{k-1}-\left(
k-1\right)  $. Now,%
\[
\left(  m+e_{k}\right)  _{k}=\underbrace{m_{k}}_{=\mu_{k}-k}%
+\underbrace{\left(  e_{k}\right)  _{k}}_{=1}=\mu_{k}-k+1=\mu_{k}-\left(
k-1\right)  .
\]
Comparing this with%
\[
\left(  m+e_{k}\right)  _{k-1}=m_{k-1}+\underbrace{\left(  e_{k}\right)
_{k-1}}_{=0}=m_{k-1}=\underbrace{\mu_{k-1}}_{=\mu_{k}}-\left(  k-1\right)
=\mu_{k}-\left(  k-1\right)  ,
\]
we obtain $\left(  m+e_{k}\right)  _{k}=\left(  m+e_{k}\right)  _{k-1}$.

Now, for each $i\in\left[  N\right]  $, we have%
\[
h_{\ell_{i}-\left(  m+e_{k}\right)  _{k}}=h_{\ell_{i}-\left(  m+e_{k}\right)
_{k-1}}\ \ \ \ \ \ \ \ \ \ \left(  \text{since }\left(  m+e_{k}\right)
_{k}=\left(  m+e_{k}\right)  _{k-1}\right)  .
\]
In other words, each entry in the $k$-th column of the matrix $\left(
h_{\ell_{i}-\left(  m+e_{k}\right)  _{j}}\right)  _{i,j\in\left[  N\right]  }$
equals the corresponding entry in the $\left(  k-1\right)  $-st column of this
matrix. Thus, the matrix $\left(  h_{\ell_{i}-\left(  m+e_{k}\right)  _{j}%
}\right)  _{i,j\in\left[  N\right]  }$ has two equal columns (namely, its
$k$-th and $\left(  k-1\right)  $-st columns). Hence, its determinant is $0$.
This proves Lemma \ref{lem.det0.cols}.
\end{proof}

Finally, we will need the Leibniz rule for products of multiple
factors:\footnote{Recall that $R=\mathbb{Z}\left[  x_{1},x_{2},\ldots
,x_{n}\right]  $.}

\begin{lemma}
\label{lem.leib-n}For any $a_{1},a_{2},\ldots,a_{n}\in R$, we have
\[
\nabla\left(  a_{1}a_{2}\cdots a_{n}\right)  =\sum_{k=1}^{n}a_{1}a_{2}\cdots
a_{k-1}\nabla\left(  a_{k}\right)  a_{k+1}a_{k+2}\cdots a_{n}.
\]

\end{lemma}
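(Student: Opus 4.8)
The statement in question is Lemma~\ref{lem.leib-n}, the multi-factor Leibniz rule for the derivation $\nabla$. Here is how I would prove it.

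\medskip

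The plan is to induct on $n$. The base cases $n=0$ (where the empty product is $1$ and $\nabla(1) = 0$, matching the empty sum on the right) and $n=1$ (where both sides equal $\nabla(a_1)$) are immediate. For the induction step, I would assume the formula holds for some $n \geq 1$ and prove it for $n+1$. Write $a_1 a_2 \cdots a_{n+1} = (a_1 a_2 \cdots a_n)\, a_{n+1}$, a product of two factors, and apply the two-factor Leibniz rule (which holds because $\nabla$ is a derivation, as recalled in Section~1) to get
\[
\nabla\left(a_1 a_2 \cdots a_{n+1}\right) = \nabla\left(a_1 a_2 \cdots a_n\right) a_{n+1} + \left(a_1 a_2 \cdots a_n\right) \nabla\left(a_{n+1}\right).
\]
Then substitute the induction hypothesis into the first term:
\[
\nabla\left(a_1 a_2 \cdots a_n\right) a_{n+1} = \left(\sum_{k=1}^{n} a_1 a_2 \cdots a_{k-1} \nabla\left(a_k\right) a_{k+1} \cdots a_n\right) a_{n+1} = \sum_{k=1}^{n} a_1 a_2 \cdots a_{k-1} \nabla\left(a_k\right) a_{k+1} \cdots a_{n+1}.
\]
Adding the second term $\left(a_1 a_2 \cdots a_n\right) \nabla\left(a_{n+1}\right)$, which is precisely the $k = n+1$ summand, completes the sum to $\sum_{k=1}^{n+1}$, which is exactly the desired right-hand side for $n+1$.

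\medskip

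There is no real obstacle here; the only things to be careful about are the degenerate ends of the product (when $k=1$ the factors $a_1 \cdots a_{k-1}$ form an empty product equal to $1$, and similarly $a_{k+1} \cdots a_n$ is empty when $k=n$), and the fact that multiplication in $R$ is commutative so there is no ambiguity in how the factors are grouped. Since $R = \mathbb{Z}[x_1, x_2, \ldots, x_N]$ is a commutative ring and $\nabla$ is $\mathbb{Z}$-linear, reassociating the product $(a_1 \cdots a_n) a_{n+1}$ and distributing $a_{n+1}$ across the sum are both legitimate. Hence Lemma~\ref{lem.leib-n} follows by induction on $n$.
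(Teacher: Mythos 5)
Your proof is correct and follows exactly the route the paper indicates: induction on $n$ using the two-factor Leibniz rule (the paper merely sketches this, noting it holds for any derivation of any ring, while you supply the details). No issues.
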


\begin{proof}
[Proof of Lemma \ref{lem.leib-n}.]This holds not just for $\nabla$ but
actually for any derivation of any ring, and can be easily proved by induction
on $n$ using the Leibniz rule.
\end{proof}

\section{The last lemma}

We now have everything in place for the proofs of Theorem \ref{thm.1} and
Corollary \ref{cor.2}. However, to keep our computation short, let us
outsource a part of it to a lemma:

\begin{lemma}
\label{lem.Dprod}Let $\sigma$ be a permutation of the set $\left[  N\right]
$. Let $\ell,m\in\mathbb{Z}^{N}$ be two $N$-tuples of integers. Let $a$ and
$b$ be two integers such that $a+b=N-1$. Then,%
\[
\nabla\left(  \prod_{i=1}^{N}h_{\ell_{i}-m_{\sigma\left(  i\right)  }}\right)
=\sum_{k=1}^{N}\left(  \ell_{k}+a\right)  \prod_{i=1}^{N}h_{\left(  \ell
-e_{k}\right)  _{i}-m_{\sigma\left(  i\right)  }}+\sum_{k=1}^{N}\left(
b-m_{k}\right)  \prod_{i=1}^{N}h_{\ell_{i}-\left(  m+e_{k}\right)
_{\sigma\left(  i\right)  }}.
\]

\end{lemma}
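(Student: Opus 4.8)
The plan is to combine the multi-factor Leibniz rule (Lemma~\ref{lem.leib-n}) with the formula for $\nabla\left(h_{n}\right)$ from Lemma~\ref{lem.Nabla-h}, and then to carry out a little bookkeeping with the permutation $\sigma$. To lighten the notation, abbreviate $c_{i}:=\ell_{i}-m_{\sigma\left(i\right)}$ for each $i\in\left[N\right]$, so that the product on the left-hand side of the lemma is $\prod_{i=1}^{N}h_{c_{i}}$. First I would apply Lemma~\ref{lem.leib-n} to this product and then replace each $\nabla\left(h_{c_{k}}\right)$ by $\left(c_{k}+N-1\right)h_{c_{k}-1}$ using Lemma~\ref{lem.Nabla-h}; this yields
\[
\nabla\left(\prod_{i=1}^{N}h_{c_{i}}\right)=\sum_{k=1}^{N}\left(c_{k}+N-1\right)\left(\prod_{\substack{i\in\left[N\right];\\i\neq k}}h_{c_{i}}\right)h_{c_{k}-1}.
\]

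Next I would split the scalar $c_{k}+N-1$. Since $a+b=N-1$, we have $c_{k}+N-1=\left(\ell_{k}-m_{\sigma\left(k\right)}\right)+\left(a+b\right)=\left(\ell_{k}+a\right)+\left(b-m_{\sigma\left(k\right)}\right)$, so the right-hand side above becomes a sum of two sums over $k\in\left[N\right]$, both carrying the same product $\left(\prod_{i\neq k}h_{c_{i}}\right)h_{c_{k}-1}$, but with respective coefficients $\ell_{k}+a$ and $b-m_{\sigma\left(k\right)}$. For the first of these I would use that $\left(\ell-e_{k}\right)_{k}=\ell_{k}-1$ while $\left(\ell-e_{k}\right)_{i}=\ell_{i}$ for $i\neq k$, which shows $\left(\prod_{i\neq k}h_{c_{i}}\right)h_{c_{k}-1}=\prod_{i=1}^{N}h_{\left(\ell-e_{k}\right)_{i}-m_{\sigma\left(i\right)}}$; hence the first sum is exactly $\sum_{k=1}^{N}\left(\ell_{k}+a\right)\prod_{i=1}^{N}h_{\left(\ell-e_{k}\right)_{i}-m_{\sigma\left(i\right)}}$, which is the first sum in the asserted identity.

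The second sum needs one further move: I would reindex it by substituting $k\mapsto\sigma\left(k\right)$, which is legitimate because $\sigma$ is a bijection of $\left[N\right]$. After this substitution the coefficient $b-m_{\sigma\left(k\right)}$ turns into $b-m_{k}$, and the accompanying product — now lacking the factor with index $i=\sigma^{-1}\left(k\right)$ and carrying $h_{\ell_{\sigma^{-1}\left(k\right)}-m_{k}-1}$ in its place — should be recognized as $\prod_{i=1}^{N}h_{\ell_{i}-\left(m+e_{k}\right)_{\sigma\left(i\right)}}$. This recognition rests on the observation that $\left(m+e_{k}\right)_{\sigma\left(i\right)}$ equals $m_{\sigma\left(i\right)}$ for every $i$ with $\sigma\left(i\right)\neq k$, and equals $m_{k}+1$ for the single $i$ with $\sigma\left(i\right)=k$. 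Adding the two resulting sums then gives the claimed formula.

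I do not expect a genuinely hard step here: the argument is a direct computation resting on the two lemmas already proved. The one place that needs care is the reindexing of the second sum together with the identity $\left(m+e_{k}\right)_{\sigma\left(i\right)}=m_{\sigma\left(i\right)}$ for all but one index $i$; this is exactly the point at which the hypothesis that $\sigma$ is a permutation (and not merely an arbitrary map $\left[N\right]\to\left[N\right]$) is used, so it is worth making explicit rather than leaving implicit.
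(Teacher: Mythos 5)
Your proposal is correct and follows essentially the same route as the paper: apply the multi-factor Leibniz rule, substitute $\nabla\left(h_{c_{k}}\right)=\left(c_{k}+N-1\right)h_{c_{k}-1}$ via Lemma \ref{lem.Nabla-h}, split $N-1=a+b$, and reindex the second sum using the bijectivity of $\sigma$ (the paper performs the reindexing at the very end after writing the second sum with $m+e_{\sigma\left(k\right)}$, whereas you reindex first and then identify the product, but this is the same computation). Your closing remark about where the permutation hypothesis enters is accurate and matches the paper's parenthetical justification.
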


\begin{proof}
[Proof of Lemma \ref{lem.Dprod}.]Fix $k\in\left[  N\right]  $. The $N$-tuple
$\ell-e_{k}$ differs from the $N$-tuple $\ell$ only in its $k$-th entry, which
is smaller by $1$. Thus, the product $\prod_{i=1}^{N}h_{\left(  \ell
-e_{k}\right)  _{i}-m_{\sigma\left(  i\right)  }}$ differs from the product
$\prod_{i=1}^{N}h_{\ell_{i}-m_{\sigma\left(  i\right)  }}$ only in its $k$-th
factor, which is $h_{\left(  \ell_{k}-1\right)  -m_{\sigma\left(  k\right)  }%
}$ instead of $h_{\ell_{k}-m_{\sigma\left(  k\right)  }}$. Hence,%
\begin{align}
\prod_{i=1}^{N}h_{\left(  \ell-e_{k}\right)  _{i}-m_{\sigma\left(  i\right)
}}  &  =\underbrace{h_{\left(  \ell_{k}-1\right)  -m_{\sigma\left(  k\right)
}}}_{=h_{\ell_{k}-m_{\sigma\left(  k\right)  }-1}}\ \ \prod_{\substack{i\in
\left[  N\right]  ;\\i\neq k}}h_{\ell_{i}-m_{\sigma\left(  i\right)  }%
}\nonumber\\
&  =h_{\ell_{k}-m_{\sigma\left(  k\right)  }-1}\prod_{\substack{i\in\left[
N\right]  ;\\i\neq k}}h_{\ell_{i}-m_{\sigma\left(  i\right)  }}.
\label{pf.lem.Dprod.pr1}%
\end{align}
The $N$-tuple $m+e_{\sigma\left(  k\right)  }$ differs from the $N$-tuple $m$
only in its $\sigma\left(  k\right)  $-th entry, which is larger by $1$. Thus,
the product $\prod_{i=1}^{N}h_{\ell_{i}-\left(  m+e_{\sigma\left(  k\right)
}\right)  _{\sigma\left(  i\right)  }}$ differs from the product $\prod
_{i=1}^{N}h_{\ell_{i}-m_{\sigma\left(  i\right)  }}$ only in its $k$-th
factor, which is $h_{\ell_{k}-\left(  m_{\sigma\left(  k\right)  }+1\right)
}$ instead of $h_{\ell_{k}-m_{\sigma\left(  k\right)  }}$ (since all the
remaining factors satisfy $i\neq k$ and therefore $\sigma\left(  i\right)
\neq\sigma\left(  k\right)  $). Hence,%
\begin{align}
\prod_{i=1}^{N}h_{\ell_{i}-\left(  m+e_{\sigma\left(  k\right)  }\right)
_{\sigma\left(  i\right)  }}  &  =\underbrace{h_{\ell_{k}-\left(
m_{\sigma\left(  k\right)  }+1\right)  }}_{=h_{\ell_{k}-m_{\sigma\left(
k\right)  }-1}}\ \ \prod_{\substack{i\in\left[  N\right]  ;\\i\neq k}%
}h_{\ell_{i}-m_{\sigma\left(  i\right)  }}\nonumber\\
&  =h_{\ell_{k}-m_{\sigma\left(  k\right)  }-1}\prod_{\substack{i\in\left[
N\right]  ;\\i\neq k}}h_{\ell_{i}-m_{\sigma\left(  i\right)  }}.
\label{pf.lem.Dprod.pr2}%
\end{align}

Forget that we fixed $k$. We thus have proved the equalities
(\ref{pf.lem.Dprod.pr1}) and (\ref{pf.lem.Dprod.pr2}) for each $k\in\left[
N\right]  $.

Since the ring $R$ is commutative, we can rewrite Lemma \ref{lem.leib-n} as
follows: For any $a_{1},a_{2},\ldots,a_{n}\in R$, we have
\[
\nabla\left(  \prod_{i=1}^{n}a_{i}\right)  =\sum_{k=1}^{n}\nabla\left(
a_{k}\right)  \prod_{\substack{i\in\left[  n\right]  ;\\i\neq k}}a_{i}.
\]

Applying this to $N=n$ and $a_{i}=h_{\ell_{i}-m_{\sigma\left(  i\right)  }}$,
we obtain%
\begin{align*}
&  \nabla\left(  \prod_{i=1}^{N}h_{\ell_{i}-m_{\sigma\left(  i\right)  }%
}\right)  \\
&  =\sum_{k=1}^{N}\underbrace{\nabla\left(  h_{\ell_{k}-m_{\sigma\left(
k\right)  }}\right)  }_{\substack{=\left(  \ell_{k}-m_{\sigma\left(  k\right)
}+N-1\right)  h_{\ell_{k}-m_{\sigma\left(  k\right)  }-1}\\\text{(by Lemma
\ref{lem.Nabla-h})}}}\prod_{\substack{i\in\left[  N\right]  ;\\i\neq
k}}h_{\ell_{i}-m_{\sigma\left(  i\right)  }}\\
&  =\sum_{k=1}^{N}\underbrace{\left(  \ell_{k}-m_{\sigma\left(  k\right)
}+N-1\right)  }_{\substack{=\left(  \ell_{k}+a\right)  +\left(  b-m_{\sigma
\left(  k\right)  }\right)  \\\text{(since }N-1=a+b\text{)}}}h_{\ell
_{k}-m_{\sigma\left(  k\right)  }-1}\prod_{\substack{i\in\left[  N\right]
;\\i\neq k}}h_{\ell_{i}-m_{\sigma\left(  i\right)  }}\\
&  =\sum_{k=1}^{N}\left(  \left(  \ell_{k}+a\right)  +\left(  b-m_{\sigma
\left(  k\right)  }\right)  \right)  h_{\ell_{k}-m_{\sigma\left(  k\right)
}-1}\prod_{\substack{i\in\left[  N\right]  ;\\i\neq k}}h_{\ell_{i}%
-m_{\sigma\left(  i\right)  }}\\
&  =\sum_{k=1}^{N}\left(  \ell_{k}+a\right)  \underbrace{h_{\ell_{k}%
-m_{\sigma\left(  k\right)  }-1}\prod_{\substack{i\in\left[  N\right]
;\\i\neq k}}h_{\ell_{i}-m_{\sigma\left(  i\right)  }}}_{\substack{=\prod
_{i=1}^{N}h_{\left(  \ell-e_{k}\right)  _{i}-m_{\sigma\left(  i\right)  }%
}\\\text{(by (\ref{pf.lem.Dprod.pr1}))}}}+\sum_{k=1}^{N}\left(  b-m_{\sigma
\left(  k\right)  }\right)  \underbrace{h_{\ell_{k}-m_{\sigma\left(  k\right)
}-1}\prod_{\substack{i\in\left[  N\right]  ;\\i\neq k}}h_{\ell_{i}%
-m_{\sigma\left(  i\right)  }}}_{\substack{=\prod_{i=1}^{N}h_{\ell_{i}-\left(
m+e_{\sigma\left(  k\right)  }\right)  _{\sigma\left(  i\right)  }}\\\text{(by
(\ref{pf.lem.Dprod.pr2}))}}}\\
&  =\sum_{k=1}^{N}\left(  \ell_{k}+a\right)  \prod_{i=1}^{N}h_{\left(
\ell-e_{k}\right)  _{i}-m_{\sigma\left(  i\right)  }}+\sum_{k=1}^{N}\left(
b-m_{\sigma\left(  k\right)  }\right)  \prod_{i=1}^{N}h_{\ell_{i}-\left(
m+e_{\sigma\left(  k\right)  }\right)  _{\sigma\left(  i\right)  }}\\
&  =\sum_{k=1}^{N}\left(  \ell_{k}+a\right)  \prod_{i=1}^{N}h_{\left(
\ell-e_{k}\right)  _{i}-m_{\sigma\left(  i\right)  }}+\sum_{k=1}^{N}\left(
b-m_{k}\right)  \prod_{i=1}^{N}h_{\ell_{i}-\left(  m+e_{k}\right)
_{\sigma\left(  i\right)  }}%
\end{align*}
(here, we have substituted $k$ for $\sigma\left(  k\right)  $ in the second
sum, since $\sigma:\left[  N\right]  \rightarrow\left[  N\right]  $ is a
bijection). This proves Lemma \ref{lem.Dprod}.
\end{proof}

\section{Proofs of the main results}

\begin{proof}
[Proof of Theorem \ref{thm.1}.]Let $S_{N}$ denote the $N$-th symmetric group
(i.e., the group of all permutations of $\left[  N\right]  $). Let $\left(
-1\right)  ^{\sigma}$ denote the sign of any permutation $\sigma$. The
definition of a determinant says that
\begin{equation}
\det\left(  a_{i,j}\right)  _{i,j\in\left[  N\right]  }=\sum_{\sigma\in S_{N}%
}\left(  -1\right)  ^{\sigma}\prod_{i=1}^{N}a_{i,\sigma\left(  i\right)  }
\label{pf.thm.1.det=}%
\end{equation}
for any matrix $\left(  a_{i,j}\right)  _{i,j\in\left[  N\right]  }\in
R^{N\times N}$. Now, Lemma \ref{lem.det.s} yields%
\[
s_{\lambda/\mu}=\det\left(  h_{\ell_{i}-m_{j}}\right)  _{i,j\in\left[
N\right]  }=\sum_{\sigma\in S_{N}}\left(  -1\right)  ^{\sigma}\prod_{i=1}%
^{N}h_{\ell_{i}-m_{\sigma\left(  i\right)  }}%
\]
(by (\ref{pf.thm.1.det=})). Applying the $\mathbb{Z}$-linear map $\nabla$ to
both sides of this equality, we find%
\begin{align*}
&  \nabla\left(  s_{\lambda/\mu}\right) \\
&  =\sum_{\sigma\in S_{N}}\left(  -1\right)  ^{\sigma}\nabla\left(
\prod_{i=1}^{N}h_{\ell_{i}-m_{\sigma\left(  i\right)  }}\right) \\
&  =\sum_{\sigma\in S_{N}}\left(  -1\right)  ^{\sigma}\left(  \sum_{k=1}%
^{N}\left(  \ell_{k}+a\right)  \prod_{i=1}^{N}h_{\left(  \ell-e_{k}\right)
_{i}-m_{\sigma\left(  i\right)  }}+\sum_{k=1}^{N}\left(  b-m_{k}\right)
\prod_{i=1}^{N}h_{\ell_{i}-\left(  m+e_{k}\right)  _{\sigma\left(  i\right)
}}\right) \\
&  \qquad\qquad\left(  \text{by Lemma \ref{lem.Dprod}}\right) \\
&  =\sum_{\sigma\in S_{N}}\left(  -1\right)  ^{\sigma}\sum_{k=1}^{N}\left(
\ell_{k}+a\right)  \prod_{i=1}^{N}h_{\left(  \ell-e_{k}\right)  _{i}%
-m_{\sigma\left(  i\right)  }}+\sum_{\sigma\in S_{N}}\left(  -1\right)
^{\sigma}\sum_{k=1}^{N}\left(  b-m_{k}\right)  \prod_{i=1}^{N}h_{\ell
_{i}-\left(  m+e_{k}\right)  _{\sigma\left(  i\right)  }}.
\end{align*}
In view of%
\begin{align*}
&  \sum_{\sigma\in S_{N}}\left(  -1\right)  ^{\sigma}\sum_{k=1}^{N}\left(
\ell_{k}+a\right)  \prod_{i=1}^{N}h_{\left(  \ell-e_{k}\right)  _{i}%
-m_{\sigma\left(  i\right)  }}\\
&  =\sum_{k=1}^{N}\left(  \ell_{k}+a\right)  \underbrace{\sum_{\sigma\in
S_{N}}\left(  -1\right)  ^{\sigma}\prod_{i=1}^{N}h_{\left(  \ell-e_{k}\right)
_{i}-m_{\sigma\left(  i\right)  }}}_{\substack{=\det\left(  h_{\left(
\ell-e_{k}\right)  _{i}-m_{j}}\right)  _{i,j\in\left[  N\right]  }\\\text{(by
(\ref{pf.thm.1.det=}))}}}\\
&  =\sum_{k=1}^{N}\left(  \ell_{k}+a\right)  \underbrace{\det\left(
h_{\left(  \ell-e_{k}\right)  _{i}-m_{j}}\right)  _{i,j\in\left[  N\right]  }%
}_{\substack{=0\text{ if }\lambda-e_{k}\notin\mathcal{P}_{N}\\\text{(by Lemma
\ref{lem.det0.rows})}}}\\
&  =\sum_{\substack{k\in\left[  N\right]  ;\\\lambda-e_{k}\in\mathcal{P}_{N}%
}}\left(  \ell_{k}+a\right)  \underbrace{\det\left(  h_{\left(  \ell
-e_{k}\right)  _{i}-m_{j}}\right)  _{i,j\in\left[  N\right]  }}%
_{\substack{=s_{\left(  \lambda-e_{k}\right)  /\mu}\\\text{(by Lemma
\ref{lem.det1.rows})}}}\\
&  =\sum_{\substack{k\in\left[  N\right]  ;\\\lambda-e_{k}\in\mathcal{P}_{N}%
}}\left(  \ell_{k}+a\right)  s_{\left(  \lambda-e_{k}\right)  /\mu}%
=\sum_{\substack{i\in\left[  N\right]  ;\\\lambda-e_{i}\in\mathcal{P}_{N}%
}}\left(  \ell_{i}+a\right)  s_{\left(  \lambda-e_{i}\right)  /\mu}%
\end{align*}
and%
\begin{align*}
&  \sum_{\sigma\in S_{N}}\left(  -1\right)  ^{\sigma}\sum_{k=1}^{N}\left(
b-m_{k}\right)  \prod_{i=1}^{N}h_{\ell_{i}-\left(  m+e_{k}\right)
_{\sigma\left(  i\right)  }}\\
&  =\sum_{k=1}^{N}\left(  b-m_{k}\right)  \underbrace{\sum_{\sigma\in S_{N}%
}\left(  -1\right)  ^{\sigma}\prod_{i=1}^{N}h_{\ell_{i}-\left(  m+e_{k}%
\right)  _{\sigma\left(  i\right)  }}}_{\substack{=\det\left(  h_{\ell
_{i}-\left(  m+e_{k}\right)  _{j}}\right)  _{i,j\in\left[  N\right]
}\\\text{(by (\ref{pf.thm.1.det=}))}}}\\
&  =\sum_{k=1}^{N}\left(  b-m_{k}\right)  \underbrace{\det\left(  h_{\ell
_{i}-\left(  m+e_{k}\right)  _{j}}\right)  _{i,j\in\left[  N\right]  }%
}_{\substack{=0\text{ if }\mu+e_{k}\notin\mathcal{P}_{N}\\\text{(by Lemma
\ref{lem.det0.cols})}}}\\
&  =\sum_{\substack{k\in\left[  N\right]  ;\\\mu+e_{k}\in\mathcal{P}_{N}%
}}\left(  b-m_{k}\right)  \underbrace{\det\left(  h_{\ell_{i}-\left(
m+e_{k}\right)  _{j}}\right)  _{i,j\in\left[  N\right]  }}%
_{\substack{=s_{\lambda/\left(  \mu+e_{k}\right)  }\\\text{(by Lemma
\ref{lem.det1.cols})}}}\\
&  =\sum_{\substack{k\in\left[  N\right]  ;\\\mu+e_{k}\in\mathcal{P}_{N}%
}}\left(  b-m_{k}\right)  s_{\lambda/\left(  \mu+e_{k}\right)  }%
=\sum_{\substack{i\in\left[  N\right]  ;\\\mu+e_{i}\in\mathcal{P}_{N}}}\left(
b-m_{i}\right)  s_{\lambda/\left(  \mu+e_{i}\right)  },
\end{align*}
this can be rewritten as%
\[
\nabla\left(  s_{\lambda/\mu}\right)  =\sum_{\substack{i\in\left[  N\right]
;\\\lambda-e_{i}\in\mathcal{P}_{N}}}\left(  \ell_{i}+a\right)  s_{\left(
\lambda-e_{i}\right)  /\mu}+\sum_{\substack{i\in\left[  N\right]  ;\\\mu
+e_{i}\in\mathcal{P}_{N}}}\left(  b-m_{i}\right)  s_{\lambda/\left(  \mu
+e_{i}\right)  }.
\]
Thus, Theorem \ref{thm.1} is proved.
\end{proof}

\begin{proof}
[Proof of Corollary \ref{cor.2}.]Define $\ell,m\in\mathbb{Z}^{N}$ as in
Theorem \ref{thm.1}. Theorem \ref{thm.1} (applied to $a=N-1$ and $b=0$) yields%
\[
\nabla\left(  s_{\lambda/\mu}\right)  =\sum_{\substack{i\in\left[  N\right]
;\\\lambda-e_{i}\in\mathcal{P}_{N}}}\left(  \ell_{i}+N-1\right)  s_{\left(
\lambda-e_{i}\right)  /\mu}+\sum_{\substack{i\in\left[  N\right]  ;\\\mu
+e_{i}\in\mathcal{P}_{N}}}\left(  0-m_{i}\right)  s_{\lambda/\left(  \mu
+e_{i}\right)  }.
\]
Theorem \ref{thm.1} (applied to $a=N$ and $b=-1$) yields%
\[
\nabla\left(  s_{\lambda/\mu}\right)  =\sum_{\substack{i\in\left[  N\right]
;\\\lambda-e_{i}\in\mathcal{P}_{N}}}\left(  \ell_{i}+N\right)  s_{\left(
\lambda-e_{i}\right)  /\mu}+\sum_{\substack{i\in\left[  N\right]  ;\\\mu
+e_{i}\in\mathcal{P}_{N}}}\left(  -1-m_{i}\right)  s_{\lambda/\left(
\mu+e_{i}\right)  }.
\]
Subtracting the latter equality from the former, we find%
\begin{align*}
0  &  =\sum_{\substack{i\in\left[  N\right]  ;\\\lambda-e_{i}\in
\mathcal{P}_{N}}}\left(  -1\right)  s_{\left(  \lambda-e_{i}\right)  /\mu
}+\sum_{\substack{i\in\left[  N\right]  ;\\\mu+e_{i}\in\mathcal{P}_{N}%
}}s_{\lambda/\left(  \mu+e_{i}\right)  }\\
&  =-\sum_{\substack{i\in\left[  N\right]  ;\\\lambda-e_{i}\in\mathcal{P}_{N}%
}}s_{\left(  \lambda-e_{i}\right)  /\mu}+\sum_{\substack{i\in\left[  N\right]
;\\\mu+e_{i}\in\mathcal{P}_{N}}}s_{\lambda/\left(  \mu+e_{i}\right)  }.
\end{align*}
In other words,
\[
\sum_{\substack{i\in\left[  N\right]  ;\\\lambda-e_{i}\in\mathcal{P}_{N}%
}}s_{\left(  \lambda-e_{i}\right)  /\mu}=\sum_{\substack{i\in\left[  N\right]
;\\\mu+e_{i}\in\mathcal{P}_{N}}}s_{\lambda/\left(  \mu+e_{i}\right)  }.
\]
This proves Corollary \ref{cor.2}.
\end{proof}

\section{Final remarks}

\textbf{1.} Clearly, Theorem \ref{thm.1} can be generalized by replacing
$\mathbb{Z}$ with any commutative ring $\mathbf{k}$. In this generality, $a$
and $b$ can be any two elements of $\mathbf{k}$ (rather than just integers)
satisfying $a+b=\left(  N-1\right)  \cdot1_{\mathbf{k}}$. However, not much
generality is gained in this way, since Corollary \ref{cor.2} easily shows
that all choices of $a$ and $b$ lead to the same sum.

\medskip

\textbf{2.} Theorem \ref{thm.1} can also be lifted to the \textquotedblleft
infinite setting\textquotedblright, i.e., to the ring of symmetric functions
in infinitely many variables (see, e.g., \cite[Chapter 7]{EC2} or
\cite[Chapter 2]{GriRei} for introductions to this ring). This is not
completely straightforward, since the diagonal derivative $\nabla$ is defined
only for finitely many indeterminates and depends on their number $N$ (for
instance, $\nabla\left(  s_{\left(  1\right)  }\right)  =N$). In the infinite
setting, it has to be replaced by a derivation $\nabla_{q}$ depending on a
scalar $q$:

Let $\mathbf{k}$ be a commutative ring, and let $q\in\mathbf{k}$ be an
element. (For example, we can have $\mathbf{k}=\mathbb{Z}\left[  q\right]  $
and $q=q$.) Let $\Lambda$ be the ring of symmetric functions in infinitely
many indeterminates $x_{1},x_{2},x_{3},\ldots$ over $\mathbf{k}$. For each
$n\in\mathbb{Z}$, we let $h_{n}$ denote the $n$-th complete homogeneous
symmetric function in $\Lambda$ (so that $h_{0}=1$ and $h_{i}=0$ for all
$i<0$). Let $\nabla_{q}:\Lambda\rightarrow\Lambda$ be the unique derivation
that satisfies%
\[
\nabla_{q}\left(  h_{n}\right)  =\left(  n+q-1\right)  h_{n-1}%
\ \ \ \ \ \ \ \ \ \ \text{for each }n>0.
\]
\footnote{This condition uniquely determines a derivation of $\Lambda$, since
the elements $h_{1},h_{2},h_{3},\ldots$ freely generate $\Lambda$ as a
commutative $\mathbf{k}$-algebra. It is easy to see that this derivation
$\nabla_{q}$ satisfies the equality $\nabla_{q}\left(  h_{n}\right)  =\left(
n+q-1\right)  h_{n-1}$ for all $n\in\mathbb{Z}$.} For $q=N\in\mathbb{N}$, this
derivation is a lift of the directional derivative operator $\nabla
:R\rightarrow R$ to $\Lambda$ (meaning that $\nabla\circ\pi=\pi\circ\nabla
_{N}$, where $\pi:\Lambda\rightarrow R$ is the evaluation homomorphism at
$x_{1},x_{2},\ldots,x_{N},0,0,0,\ldots$). With these definitions, we can
extend Theorem \ref{thm.1} to a general property of $\nabla_{q}$:

\begin{theorem}
\label{thm.3} Let $a$ and $b$ be two elements of $\mathbf{k}$ such that
$a+b=q-1$. Let $\lambda=\left(  \lambda_{1},\lambda_{2},\lambda_{3}%
,\ldots\right)  $ and $\mu=\left(  \mu_{1},\mu_{2},\mu_{3},\ldots,\right)  $
be two partitions. Set%
\[
\ell_{i}:=\lambda_{i}-i\ \ \ \ \ \ \ \ \ \ \text{and}\ \ \ \ \ \ \ \ \ \ m_{i}%
:=\mu_{i}-i\ \ \ \ \ \ \ \ \ \ \text{for each }i\geq1.
\]
Then,
\[
\nabla_{q}\left(  s_{\lambda/\mu}\right)  =\sum_{\substack{i\geq
1;\\\lambda-e_{i}\text{ is a partition}}}\left(  \ell_{i}+a\right)  s_{\left(
\lambda-e_{i}\right)  /\mu}+\sum_{\substack{i\geq1;\\\mu+e_{i}\text{ is a
partition}}}\left(  b-m_{i}\right)  s_{\lambda/\left(  \mu+e_{i}\right)  }.
\]
Here, $e_{i}$ means the infinite sequence $\left(  0,0,\ldots,0,1,0,0,0,\ldots
\right)  $ with the $1$ in its $i$-th position.
\end{theorem}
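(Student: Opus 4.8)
The plan is to run the proof of Theorem~\ref{thm.1} essentially verbatim, with three formal substitutions: the polynomial ring $R$ is replaced by $\Lambda$, the operator $\nabla$ by $\nabla_q$, and the integer $N$ occurring in Lemma~\ref{lem.Nabla-h} and Lemma~\ref{lem.Dprod} by the scalar $q$. Crucially, we do \emph{not} specialize any variables: the operator $\nabla_q$ stays on $\Lambda$ throughout. The only genuinely new ingredient is a truncation step that rewrites the infinitely-many-variables assertion as a statement about a single finite determinant of complete homogeneous symmetric functions, after which the argument is a transcription of the finite case.

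For the truncation, I would fix an integer $N$ so large that $\lambda_i=\mu_i=0$ for all $i\ge N$ (so both partitions have fewer than $N$ parts). The skew Jacobi--Trudi formula in $\Lambda$ (\cite[(2.4.16)]{GriRei}) then reads $s_{\lambda/\mu}=\det\left(h_{\lambda_i-\mu_j-i+j}\right)_{i,j\in[N]}$ with $h_n\in\Lambda$, which is formally identical to (\ref{pf.thm.1.JT}). Consequently Lemmas~\ref{lem.det.s}--\ref{lem.det0.cols} and their proofs carry over word for word to $\Lambda$: those proofs use only the vanishing $h_n=0$ for $n<0$, the order combinatorics of $\mathcal{P}_N$, and the fact that a determinant with two equal rows/columns or a zero row/column vanishes---none of which cares whether we work in $R$ or in $\Lambda$. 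I would also record that, for this $N$, an index $i$ can contribute to either sum in Theorem~\ref{thm.3} only when $i\in[N]$: for $i>N$ the tuple $\lambda-e_i$ has a negative $i$-th entry, while $\mu+e_i$ satisfies $(\mu+e_i)_{i-1}=0<1=(\mu+e_i)_i$, so neither is a partition; and for $i\in[N]$, the conditions ``$\lambda-e_i$ (resp.\ $\mu+e_i$) is a partition'' and ``$\lambda-e_i\in\mathcal{P}_N$ (resp.\ $\mu+e_i\in\mathcal{P}_N$)'' are equivalent. Hence the two sums in Theorem~\ref{thm.3} literally coincide with the two sums in Theorem~\ref{thm.1} for this $N$.

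The $\nabla_q$-side needs the $\Lambda$-analogues of Lemmas~\ref{lem.Nabla-h}, \ref{lem.leib-n}, \ref{lem.Dprod}, all immediate. The analogue of Lemma~\ref{lem.Nabla-h} is the defining relation $\nabla_q(h_n)=(n+q-1)h_{n-1}$, valid for all $n\in\mathbb{Z}$ as noted in the footnote (both sides vanish for $n\le 0$, since a derivation kills $1$ and $h_{<0}=0$). Lemma~\ref{lem.leib-n} holds for any derivation of any commutative ring, hence for $\nabla_q$ on $\Lambda$. The proof of Lemma~\ref{lem.Dprod} then goes through unchanged with $\nabla_q$ for $\nabla$ and $q$ for $N$, since its only inputs were $\nabla(h_n)=(n+N-1)h_{n-1}$ and the splitting $N-1=a+b$, which now become $\nabla_q(h_n)=(n+q-1)h_{n-1}$ and $q-1=a+b$---exactly the hypotheses of Theorem~\ref{thm.3}. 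Finally, I would copy the proof of Theorem~\ref{thm.1}: expand $s_{\lambda/\mu}=\det(h_{\ell_i-m_j})_{i,j\in[N]}$ as a signed sum over $\sigma\in S_N$ of products $\prod_i h_{\ell_i-m_{\sigma(i)}}$ via (\ref{pf.thm.1.det=}), apply the $\mathbf{k}$-linear map $\nabla_q$ termwise, rewrite each term using the $\Lambda$-version of Lemma~\ref{lem.Dprod}, reassemble the two resulting signed sums into the determinants $\det(h_{(\ell-e_k)_i-m_j})$ and $\det(h_{\ell_i-(m+e_k)_j})$, and evaluate these via the transported Lemmas~\ref{lem.det0.rows}, \ref{lem.det1.rows}, \ref{lem.det0.cols}, \ref{lem.det1.cols} as $s_{(\lambda-e_k)/\mu}$ and $s_{\lambda/(\mu+e_k)}$ (the former being $0$ when $\lambda-e_k\notin\mathcal{P}_N$, the latter when $\mu+e_k\notin\mathcal{P}_N$). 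Together with the index-range remark of the previous paragraph, this yields precisely the claimed identity. I expect no real obstacle: the single point needing genuine care is the bookkeeping in the truncation---checking that enlarging $N$ neither creates nor destroys a nonzero summand and that ``partition'' may be read as ``member of $\mathcal{P}_N$'' throughout---after which the proof writes itself.
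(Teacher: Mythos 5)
Your proposal is correct and is exactly the argument the paper intends: it sketches the proof of Theorem~\ref{thm.3} as ``similar to the proof of Theorem~\ref{thm.1}, after fixing $N$ strictly larger than the lengths of $\lambda$ and $\mu$,'' and your write-up fills in precisely the details that sketch leaves out (the truncation bookkeeping, the equivalence of ``partition'' with ``member of $\mathcal{P}_N$'' for indices $i\in[N]$, the vanishing of all summands with $i>N$, and the substitution of $q$ for $N$ in the $\nabla$-lemmas). No gaps.
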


The proof of this theorem is similar to our above proof of Theorem \ref{thm.1}
(with the minor complication that we have to fix an $N\in\mathbb{N}$ that is
strictly larger than the lengths of $\lambda$ and $\mu$, in order to apply the
Jacobi--Trudi formula), and is left to the reader.

\medskip

\textbf{3.} It is natural to attempt generalizing Theorem \ref{thm.1} to
higher-order differential operators, such as $\nabla^{\prime}:=\dfrac
{\partial^{2}}{\partial x_{1}^{2}}+\dfrac{\partial^{2}}{\partial x_{2}^{2}%
}+\cdots+\dfrac{\partial^{2}}{\partial x_{N}^{2}}$. However, finding similar
formulas for $\nabla^{\prime}\left(  s_{\lambda/\mu}\right)  $ appears
significantly harder, as the \textquotedblleft locality\textquotedblright%
\ (the fact that $\lambda$ and $\mu$ change only a very little) disappears:
For instance, for $N=3$, the expansion of $\nabla^{\prime}\left(  s_{\left(
5,3,0\right)  }\right)  $ in the Schur basis contains a $2s_{\left(
2,2,2\right)  }$ term.

\bigskip

\textbf{Acknowledgments.} The present note is a side product of the
\href{https://math.mit.edu/research/highschool/primes/YuliasDream/}{Yulia's
Dream program} 2023, in which three of its authors participated as student
researchers and the remaining one as a mentor.

\end{document}